\begin{document}
\newtheorem{theorem}{Theorem}[section]
\newtheorem{corollary}[theorem]{Corollary}
\newtheorem{lemma}[theorem]{Lemma}
\newtheorem{remark}[theorem]{Remark}
\newtheorem{example}[theorem]{Example}
\newtheorem{proposition}[theorem]{Proposition}
\newtheorem{definition}[theorem]{Definition}
\newtheorem{assumption}[theorem]{Assumption}
\def\emptyset{\varnothing}
\def\setminus{\smallsetminus}
\def\id{{\mathrm{id}}}
\def\G{{\mathcal{G}}}
\def\E{{\mathcal{E}}}
\def\H{{\mathcal{H}}}
\def\C{{\mathbb{C}}}
\def\N{{\mathbb{N}}}
\def\Q{{\mathbb{Q}}}
\def\R{{\mathbb{R}}}
\def\Z{{\mathbb{Z}}}
\def\Path{{\mathrm{Path}}}
\def\Str{{\mathrm{Str}}}
\def\st{{\mathrm{st}}}
\def\tr{{\mathrm{tr}}}
\def\opp{{\mathrm{opp}}}
\def\a{{\alpha}}
\def\be{{\beta}}
\def\de{{\delta}}
\def\e{{\varepsilon}}
\def\si{{\sigma}}
\def\la{{\lambda}}
\def\th{{\theta}}
\def\lan{{\langle}}
\def\ran{{\rangle}}
\def\isom{{\cong}}
\newcommand{\Hom}{\mathop{\mathrm{Hom}}\nolimits}
\newcommand{\End}{\mathop{\mathrm{End}}\nolimits}
\def\qed{{\unskip\nobreak\hfil\penalty50
\hskip2em\hbox{}\nobreak\hfil$\square$
\parfillskip=0pt \finalhyphendemerits=0\par}\medskip}
\def\proof{\trivlist \item[\hskip \labelsep{\bf Proof.\ }]}
\def\endproof{\null\hfill\qed\endtrivlist\noindent}

\title{$\alpha$-induction for bi-unitary connections}
\author{
{\sc Yasuyuki Kawahigashi}\\
{\small Graduate School of Mathematical Sciences}\\
{\small The University of Tokyo, Komaba, Tokyo, 153-8914, Japan}\\
{\small e-mail: {\tt yasuyuki@ms.u-tokyo.ac.jp}}
\\[0,40cm]
{\small Kavli IPMU (WPI), the University of Tokyo}\\
{\small 5--1--5 Kashiwanoha, Kashiwa, 277-8583, Japan}
\\[0,05cm]
{\small and}
\\[0,05cm]
{\small iTHEMS Research Group, RIKEN}\\
{\small 2-1 Hirosawa, Wako, Saitama 351-0198,Japan}}
\maketitle{}
\centerline{\sl Dedicated to the memory of Vaughan Jones}

\begin{abstract}
The tensor functor called 
$\alpha$-induction produces a new unitary fusion category from a
Frobenius algebra object, or a $Q$-system, in a braided
unitary fusion category.  In the operator algebraic 
language, it gives extensions of endomorphism of $N$
to $M$ arising from a subfactor
$N\subset M$ of finite index and finite depth which gives a
braided fusion category of endomorphisms of $N$.
It is also understood in terms of 
Ocneanu's graphical calculus.  We study this 
$\alpha$-induction for bi-unitary
connections, which give a characterization of finite-dimensional
nondegenerate commuting squares and gives certain $4$-tensors
appearing in recent studies of
$2$-dimensional topological order.  We show that the resulting
$\alpha$-induced bi-unitary connections are flat if we
start with a commutative Frobenius algebra, or a local $Q$-system.
Examples related
to chiral conformal field theory and the Dynkin diagrams are
presented.
\end{abstract}

\section{Introduction}

A fusion category \cite{EGNO} has recently emerged as a new
type of symmetry in a wide range of topics in
mathematics and physics.  Theory of operator
algebras gives a nice framework to study this type of new
symmetries, as exemplified by discovery of the Jones
polynomial for knots \cite{J2} from the Jones theory of
subfactors \cite{J}.  We have three operator algebraic
realizations of a fusion category based on
endomorphisms of a type III factor \cite{L1}, \cite{L2},
bimodules over type II$_1$ factors \cite[Chapter 9]{EK2},
and bi-unitary connections \cite[Section 3]{AH}.
The last approach based on bi-unitary connections recently
has renewed interest because of its relations to 2-dimensional
statistical physics \cite{BMWSHV}, \cite{K4}.

A certain $4$-tensor, a (finite) family of complex numbers indexed
by four indices, is studied in physics literature such as 
\cite{BMWSHV}, \cite{LFHSV}.   This has been identified with a 
bi-unitary connection in the subfactor sense in
\cite{K4}, and further studies \cite{K5}, \cite{K6}, \cite{K7}
have followed in this direction.  Also see \cite{H} for a recent
development.  Bi-unitary connections and related topics
are also recently studied in \cite{CEM}, \cite{DGGJ}.
This approach to a fusion category based on bi-unitary connections
has advantage that everything is described with finite dimensional
matrices and thus in principle computable on computers, while
endomorphisms of a type III factor and bimodules over type II$_1$
factors are infinite dimensional.  This computability is one reason
physicists are interested in this approach recently.  Our
aim in this paper to study \textit{$\alpha$-induction} in this framework
of bi-unitary connections.

The tensor functor $\alpha$-induction originates in the operator
algebraic studies of \textit{chiral conformal field theory} \cite{LR}.
In this context, our basic object is a \textit{conformal net},
which is a family of von Neumann algebras
parametrized by intervals contained in the circle $S^1$.
It has a representation theory of
\textit{superselection sectors} and this has a structure of
\textit{braiding} \cite{FRS1}, \cite{FRS2}.  
If a conformal net has a certain finiteness
property called \textit{complete rationality}, we obtain a
\textit{modular tensor category} of its representations \cite{KLM}.
See a short review \cite{K3a} and a longer review \cite{K3}
for a general theory of this topic.

In a classical representation theory, if we have a representation
of a subgroup $H\subset G$, we have a method of \textit{induction}
to obtain a representation of $G$.  We have a similar, but more
subtle, induction machinery for representation theory of
conformal nets, depending on braiding structure.
Suppose we have an inclusion $A\subset B$
of completely rational conformal nets and $\lambda$ is
a representation of $A$ given as a Doplicher-Haag-Roberts
endomorphism of $A(I)$ for some fixed interval $I$ in $S^1$.
Then we can extend this endomorphism to $B(I)$ using the
braiding.  This was first defined by Longo-Rehren \cite{LR}
and further studied by Xu \cite{X1} and B\"ockenhauer-Evans
\cite{BE1}, \cite{BE2}, \cite{BE3}, where this was named
as \textit{$\alpha$-induction}.  

In a completely different setting,
Ocneanu had a machinery of \textit{flat connections} to study
subfactors \cite{O1}, \cite{O2}, which consists of entries of a
large unitary matrix.  Then using a braiding structure of
flat connections on the Dynkin diagrams of type $A$, he introduced
a graphical calculus to construct new fusion categories related 
to the Goodman-de la Harpe subfactors \cite{GHJ} in
\cite{O3}.  We have unified the two theories of $\alpha$-induction
and Ocneanu's graphical calculus in a fully general setting
of braided fusion categories
and proved various basic properties such as appearance of
modular invariants in \cite{BEK1}, \cite{BEK2} in the framework
of endomorpshisms of type III factors.  In this setting,
$\alpha$-induction is understood as a method of extending
an endomorphism of a factor $N$ to another factor $M\supset N$
using braiding, where we do not assume anything about conformal
field theory.  This $\alpha$-induction has been also understood
in the language of bimodules and more abstract braided fusion
category.

Our motivation for this paper is as follows.  First, it is
nice to have a concrete realization of $\alpha$-induction in 
the setting of bi-unitary connections due to its
finite dimensionality and relations to statistical physics.
Secondly, it gives a
generalization of Ocneanu's graphical calculus in a much more
general setting.  We also
note that the same mathematical structure as $\alpha$-induction 
appears in the context of anyon condensation \cite{BS}, which 
gives another reason to study $\alpha$-induction in this
setting.  (See \cite[Table 1]{K} for this direction.)

Our main result is Theorem \ref{equiv} which shows our bi-unitary
connection defined in Definition \ref{W4} is a correct description
of $\alpha$-induction.
We have a remark that though we use 
an endomorphism framework \cite{BEK1}, \cite{BEK2}, 
we only need abstract properties of $\alpha$-induction, so we can
also use a bimodule framework or a setting based on
more abstract fusion categories for $\alpha$-induction for
our results.

This work was partially supported by 
JST CREST program JPMJCR18T6 and
Grants-in-Aid for Scientific Research 19H00640
and 19K21832. A part of this work was done at Microsoft Station Q
at Santa Barbara and University of California, Berkeley.  
The author thanks for their hospitality.

\section{Preliminaries on braided fusion categories, subfactors
and $\alpha$-induction}
\label{prelim}

We give our setting for $\alpha$-induction for
a subfactor $N\subset M$ and endomorphisms of $N$
as in \cite{BEK1}.  Let $N$ be a type III factor.  Throughout
this paper, we assume the following for 
$\Delta$ which is a finite set of mutually inequivalent
irreducible endomorphisms of $N$ of finite dimension.
(See \cite[Definition 2.1]{BEK1}.)

\begin{assumption}\label{assum0}{\rm
We have the following for $\Delta$.

(1) The identity automorphism is in $\Delta$.

(2) For any $\lambda\in\Delta$, we have another element
$\mu\in\Delta$ which is equivalent to $\bar\lambda$.

(3) For any $\lambda,\mu\in\Delta$, the composition
$\lambda\mu$ decomposes into a direct sum of irreducible
endomorphisms each of which is equivalent to one in $\Delta$.

(4) The set $\Delta$ has a \textit{braiding} 
$\e(\lambda,\mu)\in\Hom(\lambda\mu,\mu\lambda)$ as in
\cite[Definition 2.2]{BEK1}.
}\end{assumption}

We further make an assumption on connectedness of certain
bipartite graphs as follows, which will be necessary in the
following Section.  Note that this assumption depends
on a choice of $\mu$, an irreducible endomorphism of $N$ in
$\Delta$.  This is assumed throughout the paper, except
for Section \ref{exa}.

\begin{assumption}\label{assump}{\rm
Consider a bipartite graph defined as follows.
Let both even and odd vertex sets be labeled with
the elements in $\Delta$.  The number of edges between
the even vertex $\nu_1$ and the odd vertex $\nu_2$ is
given by $\dim\Hom(\nu_1\mu,\nu_2)$.  We assume that this
graph is connected.
}\end{assumption}

Endomorphisms of $N$ with finite dimension
whose irreducible decompositions
give endomorpshisms equivalent to ones in $\Delta$ produce
a fusion category, where objects are such endomorphisms and
morphisms are intertwiners between endomorphisms.
(See \cite[Section 2.1]{BEK1} for more details on intertwiners.
Also see \cite{BKLR} for a recent treatment of fusion categories
and subfactors.  See \cite{EGNO} for a more abstract
and algebraic treatment of fusion categories.)
We consider a subfactor $N\subset M$ with finite index
whose canonical
endomorphism $\theta$ decomposes into a sum of endomorphisms
equivalent to ones in $\Delta$.  Such a subfactor automatically
has a finite depth.  For a fixed $N$ and
a fusion category of such endomorphisms, an extension $M$ of $N$
is in a bijective correspondence to a \textit{$Q$-system}
as in \cite[Section 6]{L}.  In algebraic literature, this is
often called a \textit{Frobenius algebra}.  (In this paper, we
only consider $C^*$-tensor categories. Also see \cite[Theorem 2.3]{M}
for a bimodule formulation of a $Q$-system.)

Since we assume to have a braiding, our fusion
category is a braided fusion category.  If a braiding
is nondegenerate in the sense of \cite[Definition 2.3]{BEK1},
we say that this braided fusion category is a \textit{modular
tensor category}, but we do \textit{not} assume this
nondegeneracy in this paper.  We have a notion of \textit{locality},
$\e(\theta,\theta)\gamma(v)=\gamma(v)$, for a $Q$-system corresponding
to a subfactor $N\subset M$.  (Here $\gamma$ is the canonical
endomorphism of $M$ and an isometry $v\in M$ satisfies
$vx=\gamma(x)v$ for $x\in M$ and $M=Nv$.)  The name locality
comes from locality of an extension of a conformal net
in the setting of \cite[Theorem 4.9]{LR} and 
it was called \textit{chiral locality} in \cite{BEK1}.
A local $Q$-system is also called a \textit{commutative
Frobenius algebra} in algebraic literature.
We deal with \textit{both} local and non-local $Q$-systems 
in this paper.

The procedure called
\textit{$\alpha$-induction} was defined in \cite[Proposition 3.9]{LR} 
as follows.
\[
\alpha_\lambda^\pm=\bar\iota^{-1}\cdot
\mathrm{Ad}(\varepsilon^\pm(\lambda,\theta))\cdot\lambda\cdot\bar\iota,
\]
where $\lambda$ is an endomorphism in $\Delta$, $\iota$ is
the inclusion map $N\hookrightarrow M$, $\theta=\bar\iota\cdot\iota$
is the dual canonical endomorphism of $N\subset M$, and $\pm$
stands for a choice of positive/negative braiding.
It is a nontrivial fact that
$\mathrm{Ad}(\varepsilon^\pm(\lambda,\theta))\cdot
\lambda\cdot\bar\iota(x)$ is in the image of $\bar\iota$ 
for $x\in M$.  We have $\alpha_\lambda^\pm(x)=\lambda(x)$ for
$x\in N$ and $\alpha_\lambda^\pm(v)=
\varepsilon^\pm(\lambda,\theta)^*v$.
See \cite[Section 3]{BEK1} for basic properties of $\alpha$-induction
in this setting.

In a very different setting,
Ocneanu used Fig.~\ref{chigen} to represent a
chiral generator in the double triangle algebra.
(Also see \cite[Fig.~47]{BEK1}.)
It was identified with the $\alpha$-induction in
\cite[Theorem 5.3]{BEK1}.  (See \cite[Section 4]{BEK1} for the
double triangle algebra and
\cite[Fig.~4.1]{BEK1} for a graphical convention
involving small half circles.)

\begin{figure}[H]
\begin{center}
\begin{tikzpicture}[scale=0.8]
\draw [thick](1,1)--(5,1);
\draw [thick](1,3)--(5,3);
\draw (3,1)--(3,1.8);
\draw (3,2.2)--(3,3);
\draw (2,3) arc (180:360:1);
\draw (1.8,3) arc (180:360:0.2);
\draw (2.8,3) arc (180:360:0.2);
\draw (3.2,1) arc (0:180:0.2);
\draw (3.8,3) arc (180:360:0.2);
\draw (0.4,1.9)node{$\displaystyle\sum_{a,b,\mu}$};
\draw (3.5,1.8)node{$\lambda$};
\draw (3,1.4)node[left]{$\mu$};
\draw (1,1)node[below]{$b$};
\draw (5,1)node[below]{$b$};
\draw (1,3)node[above]{$a$};
\draw (2.5,3)node[above]{$b$};
\draw (3.5,3)node[above]{$b$};
\draw (5,3)node[above]{$a$};
\end{tikzpicture}
\caption{The chiral generator $p_\lambda^+$}
\label{chigen}
\end{center}
\end{figure}


For various diagrams, we use the convention in \cite[Section 3]{BEK1}.
In particular, our convention is as in Fig.\ref{triple}, where
$T$ is an isometry in $\Hom(\lambda,\mu\nu)$ as in
\cite[Fig.~21]{BEK1}.
Note that rotation invariance of this type of diagrams
is due to the \textit{Frobenius reciprocity} for endomorphisms
due to Izumi \cite{I1}, \cite{I2}. 
Also as in \cite[Section 4]{BEK1}, we draw a thin wire for
an $N$-$N$ morphism, a thick wire for an $N$-$M$ or $M$-$N$ 
morphism, and a very thick wire for an $M$-$M$ morphism.
For a thick wire, we can tell whether it stands for an
$N$-$M$ or $M$-$N$ morphism from a diagram.

\begin{figure}[H]
\begin{center}
\begin{tikzpicture}[scale=0.8]
\draw [-to](2,3)--(2,2);
\draw [-to](2,2)--(1,1);
\draw [-to](2,2)--(3,1);
\draw [-to](8,3)--(9,2);
\draw [-to](10,3)--(9,2);
\draw [-to](9,2)--(9,1);
\draw (1,1)node[below]{$\mu$};
\draw (3,1)node[below]{$\nu$};
\draw (9,1)node[below]{$\lambda$};
\draw (2,3)node[above]{$\lambda$};
\draw (8,3)node[above]{$\mu$};
\draw (10,3)node[above]{$\nu$};
\draw (2,1.8)node[below]{$T$};
\draw (9,2.2)node[above]{$T^*$};
\draw (4.5,2)node
{$=\sqrt[4]{\displaystyle\frac{d_\mu d_\nu}{d_\lambda}}T,$};
\draw (11.5,2)node
{$=\sqrt[4]{\displaystyle\frac{d_\mu d_\nu}{d_\lambda}}T^*.$};
\end{tikzpicture}
\caption{Graphical convention for normalization of an intertwiner}
\label{triple}
\end{center}
\end{figure}

\section{$\alpha$-induction for bi-unitary connections}
\label{alpha}

We now produce a family of new bi-unitary connections
from the setting as in the previous Section.
We simply say a connection for a bi-unitary 
connection in this paper.

We first recall the definition of a connection.
(See \cite[Section 11.3]{EK2}, \cite[Section 3]{AH},
\cite[Section 2]{K5}.  There is an issue of connectedness
of certain graphs and we follow conventions of 
\cite[Section 3]{AH} on this matter.)

We have four finite bipartite graphs $\G, \G', \H, \H'$.
The vertex set $V_0$ is common for even vertices of
$\G$ and $\H$.  Similarly, the vertex sets $V_1,V_2,V_3$
are common for odd vertices of $\H$ and $\G'$, 
even vertices of $\G'$ and $\H'$, and odd vertices
of $\G$ and $\H'$, respectively.  
The four graphs satisfy some properties about 
the Perron-Frobenius
eigenvalues and eigenvectors as in \cite[Section 2]{K5}. 
We choose edge $\xi_0,\xi_1,\xi_2,\xi_3$ from the
graphs $\H, \G', \H', \G$, respectively so that they
make a closed square called a \textit{cell}.  A map $W$ called
a \textit{connection} assigns a complex number to each of such cells.
This complex value is represented with a diagram in Fig.~\ref{conn0}.
This map $W$ satisfies axioms called \textit{bi-unitarity} as
in \cite[Definition 2.2]{K5}.  (See also Fig.~\ref{conn14},
\ref{conn15}, \ref{conn16} below.)
The name ``bi-unitarirty'' means
that each number in Fig.~\ref{conn0} is an entry of a unitary
matrix in two ways, one after normalization arising from the
Perron-Frobenius eigenvector entries.
Now we require that the graphs
$\G$ and $\G'$ to be connected, but we do \textit{not} require
this for $\H$ and $\H'$.  (This convention is different from
the one in \cite{K5}, and the same as in \cite{AH}.)
We have an equivalence relation for connections on the
same four graphs as in Remark after \cite[Theorem 3]{AH}.
We call the graphs $\G,\G',\H,\H'$ the horizontal top graph,
the horizontal bottom graph, the vertical left graph and
the vertical right graph of $W$, respectively.  We also
call the vertices in $V_0,V_1,V_2,V_3$ the upper left vertices,
the lower left vertices, the lower right vertices and
the upper right vertices, respectively.

\begin{figure}[H]
\begin{center}
\begin{tikzpicture}
\draw [-to](1,1)--(2,1);
\draw [-to](1,2)--(2,2);
\draw [-to](1,2)--(1,1);
\draw [-to](2,2)--(2,1);
\draw (2,1.5)node[right]{$\xi_2$};
\draw (1,1.5)node[left]{$\xi_0$};
\draw (1.5,2)node[above]{$\xi_3$};
\draw (1.5,1)node[below]{$\xi_1$};
\draw (1,1)node[below left]{$x_1$};
\draw (2,1)node[below right]{$x_2$};
\draw (1,2)node[above left]{$x_0$};
\draw (2,2)node[above right]{$x_3$};
\draw (1.5,1.5)node{$W$};
\end{tikzpicture}
\caption{The four graphs for a connection}
\label{conn0}
\end{center}
\end{figure}

It is well-known that this bi-unitarity condition is characterized 
in terms of a \textit{commuting square} as we now explain below.
A commuting square of finite dimensional $C^*$-algebras
$\begin{array}{ccc}
A&\subset& B\\
\cap && \cap\\
C&\subset&D
\end{array}$
with a trace on $D$
is characterized by the following mutually equivalent conditions.
(See \cite[Proposition 9.51]{EK2}, for example.)

\begin{enumerate}
\item The conditional expectation $E_B$ from $D$
to $B$ with respect to trace restricted to $C$ coincides
with the conditional expectation $E_A$ from $C$
to $A$ with respect to trace.
\item The conditional expectation $E_C$ from $D$
to $C$ with respect to trace restricted to $B$ coincides
with the conditional expectation $E_A$ from $B$
to $A$ with respect to trace.
\end{enumerate}

This notion was originally considered in \cite{P1}.
We say that this commuting square is \textit{nondegenerate}
if the span $BC$ is equal to $D$.  (Being nondegenerate is
also sometimes said to be \textit{symmetric}.)
A nondegenerate square of finite dimensional $C^*$-algebras is
described with a connection, for which we do not know yet whether
it satisfies bi-unitarity, as in \cite[Section 11.2]{EK2}.
Then it has been proved in \cite[Theorem 1.10]{Sc} that the square gives
a commuting square if and only the connection satisfies
bi-unitarity.  (Also see \cite[Theorem 11.2]{EK2}.)  
In this sense, having a bi-unitary connection and having a
nondegenerate commuting square of finite dimensional $C^*$-algebras
are the same thing.

A commuting square
also naturally appears from a subfactor $N\subset M$ with
finite Jones index as
$\begin{array}{ccc}
M'\cap M_k&\subset& M'\cap M_{k+1}\\
\cap && \cap\\
N'\cap M_k&\subset& N'\cap M_{k+1}
\end{array}$, where $N\subset M \subset M_1\subset M_2\subset\cdots$
is the Jones tower arising from the Jones basic construction
\cite{J}.  (Also see \cite[Section 9.6]{EK2} for such a
commuting square.)
If the original subfactor $N\subset M$ is hyperfinite, of type II$_1$
and of \textit{finite depth}, then the above commuting square
recovers the original subfactor $N\subset M$ by Popa's theorem
\cite{P2}.  In this sense, a connection encodes complete
information about such a subfactor.  A connection corresponding
to such a commuting square arising from a subfactor $N\subset M$
satisfies a special extra property called \textit{flatness},
which was introduced by Ocneanu \cite{O1}, \cite{O2} and studied
in \cite{K1}.
(See \cite[Section 11.4]{EK2} for more discussions on flatness.)

We first define the connection $W_1(\lambda,\mu)$, but we make
a remark on one issue again.  In this paper, we use operator
algebraic realization
of a braided fusion category and $\alpha$-induction based
on endomorpshisms of a type III factor as in \cite{BEK1} and
\cite{BEK2}, such as \cite[Fig.~30]{BEK1}, but this is simply because
this was the first place where all necessary details were
worked out, and we can equally use other formulations based on
bimodules or abstract fusion categories.  Such a choice of
formulation does not cause any change in our results here.

\begin{definition}\label{W1}{\rm
Let $\nu_1,\nu_2,\nu_3,\nu_4\in \Delta$.
Consider the diagram in Fig.~\ref{conn2}.
By composing isometries $T_1\in\Hom(\nu_1\mu,\nu_2)$,
$T_2\in\Hom(\lambda\nu_2,\nu_4)$,
$T_3\in\Hom(\lambda\nu_1,\nu_3)$, and
$T_4\in\Hom(\nu_3\mu,\nu_4)$, we obtain a complex number
$T_4 T_3 \lambda(T_1^*)T_2^*\in\Hom(\nu_4,\nu_4)$.
We define the connection $W_1(\lambda,\mu)$ by this number
and draw Fig.~\ref{conn1}.
}\end{definition}

We note that the left vertical bipartite
graph in this Definition is defined as follows. 
One set of vertices is
given by $\Delta$ and the other is the same.  The number
of edges between the vertices $\nu_1$ and $\nu_3$ is given
by $\dim\Hom(\lambda\nu_1,\nu_3)$.  The other graphs are defined
similarly, and this remark applies to all the Definitions of
the connections in this Section.

\begin{figure}[H]
\begin{center}
\begin{tikzpicture}[scale=2.5]
\draw [-to](1.2,1)--(1.8,1);
\draw [-to](1.2,2)--(1.8,2);
\draw [-to](1,1.8)--(1,1.2);
\draw [-to](2,1.8)--(2,1.2);
\draw (1,1)node{$\nu_3\mu$};
\draw (2,1)node{$\nu_4$};
\draw (1,2)node{$\lambda\nu_1\mu$};
\draw (2,2)node{$\lambda\nu_2$};
\draw (1.5,2)node[above]{$\lambda(T_1)$};
\draw (1.5,1)node[below]{$T_4$};
\draw (1,1.5)node[left]{$T_3$};
\draw (2,1.5)node[right]{$T_2$};
\end{tikzpicture}
\caption{The diagram for the connection $W_1(\lambda,\mu)$
in Fig.~\ref{conn1}}
\label{conn2}
\end{center}
\end{figure}

\begin{figure}[H]
\begin{center}
\begin{tikzpicture}[scale=1.7]
\draw [-to](1,1)--(2,1);
\draw [-to](1,2)--(2,2);
\draw [-to](1,2)--(1,1);
\draw [-to](2,2)--(2,1);
\draw (1,1)node[below left]{$\nu_3$};
\draw (2,1)node[below right]{$\nu_4$};
\draw (1,2)node[above left]{$\nu_1$};
\draw (2,2)node[above right]{$\nu_2$};
\draw (1.5,1)node[below]{$T_4$};
\draw (1.5,2)node[above]{$T_1$};
\draw (1,1.5)node[left]{$T_3$};
\draw (2,1.5)node[right]{$T_2$};
\draw (1.5,1.5)node{$W_1(\lambda,\mu)$};
\end{tikzpicture}
\caption{The standard diagram for a connection $W_1(\lambda,\mu)$}
\label{conn1}
\end{center}
\end{figure}

We often drop labels for the connection and/or intertwiners
as long as no confusion arises, and simply
draw a diagram in Fig.~\ref{conn1a}.

\begin{figure}[H]
\begin{center}
\begin{tikzpicture}
\draw [-to](1,1)--(2,1);
\draw [-to](1,2)--(2,2);
\draw [-to](1,2)--(1,1);
\draw [-to](2,2)--(2,1);
\draw (1,1)node[below left]{$\nu_3$};
\draw (2,1)node[below right]{$\nu_4$};
\draw (1,2)node[above left]{$\nu_1$};
\draw (2,2)node[above right]{$\nu_2$};
\end{tikzpicture}
\caption{The standard diagram for a connection $W_1(\lambda,\mu)$}
\label{conn1a}
\end{center}
\end{figure}

\begin{figure}[H]
\begin{center}
\begin{tikzpicture}[scale=0.6]
\draw (6.5,8.5) arc (0:180:2);
\draw (4,7) arc (0:180:1.5);
\draw (5,6) arc (0:180:1);
\draw (1,5) arc (180:360:1);
\draw (2,4) arc (180:360:1.5);
\draw (3.5,2.5) arc (180:360:1.5);
\draw (6.5,8.5)--(6.5,2.5);
\draw (1,5)--(1,7);
\draw (3,5)--(3,6);
\draw (5,4)--(5,6);
\draw (2.5,2.5)node{$\nu_3$};
\draw (3.8,8.3)node{$\nu_2$};
\draw (1,5.5)node[left]{$\lambda$};
\draw (3,5.5)node[right]{$\nu_1$};
\draw (5,5.5)node[right]{$\mu$};
\draw (6.5,5.5)node[right]{$\bar\nu_4$};
\end{tikzpicture}
\caption{The diagram for the connection $W_1(\lambda,\mu)$
in Fig.~\ref{conn1}}
\label{conn3}
\end{center}
\end{figure}

We draw a diagram as in Fig.~\ref{conn3} which represents
a complex number in the standard convention as in
\cite[Section 3]{BEK1}.  (Note that in Fig.~\ref{conn3}, 
we drop orientations of wires which go
from the top to the bottom.)  This complex number 
is equal to the one represented by the composition of
isometries as in Fig.~\ref{conn2} multiplied by
$\sqrt{d_\lambda d_\mu d_{\nu_1} d_{\nu_4}}$
by the standard convention in Fig.~\ref{triple}.

Then unitarity of the connection $W_1(\lambda,\mu)$ is
represented as in Fig.~\ref{conn14}, \ref{conn15}.
Here the bars above the right squares denote the complex
conjugates.

\begin{figure}[H]
\begin{center}
\begin{tikzpicture}
\draw [-to](2,1)--(3,1);
\draw [-to](2,2)--(3,2);
\draw [-to](2,2)--(2,1);
\draw [-to](3,2)--(3,1);
\draw (2,1)node[below left]{$\nu_3$};
\draw (3,1)node[below right]{$\nu_4$};
\draw (2,2)node[above left]{$\nu_1$};
\draw (3,2)node[above right]{$\nu_2$};
\draw (2.5,1)node[below]{$T_4$};
\draw (2.5,2)node[above]{$T_1$};
\draw (2,1.5)node[left]{$T_3$};
\draw (3,1.5)node[right]{$T_2$};
\draw [-to](4.5,1)--(5.5,1);
\draw [-to](4.5,2)--(5.5,2);
\draw [-to](4.5,2)--(4.5,1);
\draw [-to](5.5,2)--(5.5,1);
\draw (4.5,1)node[below left]{$\nu_3$};
\draw (5.5,1)node[below right]{$\nu_4$};
\draw (4.5,2)node[above left]{$\nu_1$};
\draw (5.5,2)node[above right]{$\nu'_2$};
\draw (5,1)node[below]{$T_4$};
\draw (5,2)node[above]{$T'_1$};
\draw (4.5,1.5)node[left]{$T_3$};
\draw (5.5,1.5)node[right]{$T'_2$};
\draw [thick](4.3,2.6)--(5.7,2.6);
\draw (0.8,1.4)node{$\displaystyle\sum_{\nu_3,T_3,T_4}$};
\draw (7.5,1.5)node{$=\delta_{T_1,T'_1} \delta_{T_2,T'_2}$};
\end{tikzpicture}
\caption{Unitarity (1) for the connection $W_1(\lambda,\mu)$}
\label{conn14}
\end{center}
\end{figure}

\begin{figure}[H]
\begin{center}
\begin{tikzpicture}
\draw [-to](2,1)--(3,1);
\draw [-to](2,2)--(3,2);
\draw [-to](2,2)--(2,1);
\draw [-to](3,2)--(3,1);
\draw (2,1)node[below left]{$\nu_3$};
\draw (3,1)node[below right]{$\nu_4$};
\draw (2,2)node[above left]{$\nu_1$};
\draw (3,2)node[above right]{$\nu_2$};
\draw (2.5,1)node[below]{$T_4$};
\draw (2.5,2)node[above]{$T_1$};
\draw (2,1.5)node[left]{$T_3$};
\draw (3,1.5)node[right]{$T_2$};
\draw [-to](4.5,1)--(5.5,1);
\draw [-to](4.5,2)--(5.5,2);
\draw [-to](4.5,2)--(4.5,1);
\draw [-to](5.5,2)--(5.5,1);
\draw (4.5,1)node[below left]{$\nu'_3$};
\draw (5.5,1)node[below right]{$\nu_4$};
\draw (4.5,2)node[above left]{$\nu_1$};
\draw (5.5,2)node[above right]{$\nu_2$};
\draw (5,1)node[below]{$T'_4$};
\draw (5,2)node[above]{$T_1$};
\draw (4.5,1.5)node[left]{$T'_3$};
\draw (5.5,1.5)node[right]{$T_2$};
\draw [thick](4.3,2.6)--(5.7,2.6);
\draw (0.8,1.4)node{$\displaystyle\sum_{\nu_2,T_1,T_2}$};
\draw (7.5,1.5)node{$=\delta_{T_3,T'_3} \delta_{T_4,T'_4}$};
\end{tikzpicture}
\caption{Unitarity (2) for the connection $W_1(\lambda,\mu)$}
\label{conn15}
\end{center}
\end{figure}

Crossing symmetry for the connection $W_1(\lambda,\mu)$
is given as in Fig.~\ref{conn16}, where
$\tilde T_1$ and $\tilde T_4$ stand for the Frobenius
duals of $T_1$ and $T_4$, respectively.  This is a well-known 
relation arising from the tetrahedral symmetry of 
the $6j$-symbols as in \cite[Definition 12.15]{EK2}, but we include
a simple argument for this, because we need a similar argument
later for $W_3(\lambda)$ as in Fig.~\ref{conn8}.

\begin{figure}[H]
\begin{center}
\begin{tikzpicture}
\draw [-to](1,1)--(2,1);
\draw [-to](1,2)--(2,2);
\draw [-to](1,2)--(1,1);
\draw [-to](2,2)--(2,1);
\draw (1,1)node[below left]{$\nu_4$};
\draw (2,1)node[below right]{$\nu_3$};
\draw (1,2)node[above left]{$\nu_2$};
\draw (2,2)node[above right]{$\nu_1$};
\draw (1.5,1)node[below]{$\tilde T_4$};
\draw (1.5,2)node[above]{$\tilde T_1$};
\draw (1,1.5)node[left]{$T_2$};
\draw (2,1.5)node[right]{$T_3$};
\draw [-to](5.5,1)--(6.5,1);
\draw [-to](5.5,2)--(6.5,2);
\draw [-to](5.5,2)--(5.5,1);
\draw [-to](6.5,2)--(6.5,1);
\draw (5.5,1)node[below left]{$\nu_3$};
\draw (6.5,1)node[below right]{$\nu_4$};
\draw (5.5,2)node[above left]{$\nu_1$};
\draw (6.5,2)node[above right]{$\nu_2$};
\draw (6,1)node[below]{$T_4$};
\draw (6,2)node[above]{$T_1$};
\draw (5.5,1.5)node[left]{$T_3$};
\draw (6.5,1.5)node[right]{$T_2$};
\draw [thick](5.3,2.6)--(6.7,2.6);
\draw (3.75,1.5)node{$=\displaystyle
\sqrt{\frac{d_{\nu_1}d_{\nu_4}}{d_{\nu_2}d_{\nu_3}}}$};
\end{tikzpicture}
\caption{Crossing symmetry for the connection $W_1(\lambda,\mu)$}
\label{conn16}
\end{center}
\end{figure}

\begin{figure}[H]
\begin{center}
\begin{tikzpicture}[scale=0.6]
\draw (6.5,9) arc (0:180:1.5);
\draw (5,7.5) arc (0:180:1.5);
\draw (3,6.5) arc (0:180:1);
\draw (1,4.5) arc (180:360:1.5);
\draw (2.5,3) arc (180:360:2);
\draw (3,5.5) arc (180:360:1);
\draw (6.5,9)--(6.5,3);
\draw (1,6.5)--(1,4.5);
\draw (3,6.5)--(3,5.5);
\draw (5,7.5)--(5,5.5);
\draw (2,9)node{$\nu_3$};
\draw (4,3)node{$\nu_2$};
\draw (1,6)node[left]{$\lambda$};
\draw (3,6)node[left]{$\nu_1$};
\draw (5,6)node[right]{$\mu$};
\draw (6.5,6)node[right]{$\bar\nu_4$};
\end{tikzpicture}
\caption{A vertical reflection of Fig.~\ref{conn3}}
\label{conn3b}
\end{center}
\end{figure}

We first make  a vertical
reflection of the diagram in Fig.~\ref{conn3} to obtain
Fig.~\ref{conn3b}.
We then redraw Fig.~\ref{conn3b} to obtain Fig.~\ref{conn3c}, 
which represents the complex number given by the diagram
in Fig.~\ref{conn3d} multiplied by
$\sqrt{d_\lambda d_{\nu_2} d_\mu d_{\nu_3}}$.

\begin{figure}[H]
\begin{center}
\begin{tikzpicture}[scale=0.6]
\draw (6.5,8.5) arc (0:180:2);
\draw (4,7) arc (0:180:1.5);
\draw (5,6) arc (0:180:1);
\draw (1,5) arc (180:360:1);
\draw (2,4) arc (180:360:1.5);
\draw (3.5,2.5) arc (180:360:1.5);
\draw (6.5,8.5)--(6.5,2.5);
\draw (1,5)--(1,7);
\draw (3,5)--(3,6);
\draw (5,4)--(5,6);
\draw (2.5,2.5)node{$\nu_4$};
\draw (3.8,8.3)node{$\nu_1$};
\draw (1,5.5)node[left]{$\lambda$};
\draw (3,5.5)node[right]{$\nu_2$};
\draw (5,5.5)node[right]{$\bar\mu$};
\draw (6.5,5.5)node[right]{$\bar\nu_3$};
\end{tikzpicture}
\caption{Redrawing of Fig.~\ref{conn3b}}
\label{conn3c}
\end{center}
\end{figure}

\begin{figure}[H]
\begin{center}
\begin{tikzpicture}[scale=1.7]
\draw [-to](1,1)--(2,1);
\draw [-to](1,2)--(2,2);
\draw [-to](1,2)--(1,1);
\draw [-to](2,2)--(2,1);
\draw (1,1)node[below left]{$\nu_4$};
\draw (2,1)node[below right]{$\nu_3$};
\draw (1,2)node[above left]{$\nu_2$};
\draw (2,2)node[above right]{$\nu_1$};
\draw (1.5,1)node[below]{$\tilde T_4$};
\draw (1.5,2)node[above]{$\tilde T_1$};
\draw (1,1.5)node[left]{$T_2$};
\draw (2,1.5)node[right]{$T_1$};
\draw (1.5,1.5)node{$W_1(\lambda,\bar\mu)$};
\end{tikzpicture}
\caption{The diagram for a connection $W_1(\lambda,\bar\mu)$}
\label{conn3d}
\end{center}
\end{figure}

This shows we have crossing symmetry, Fig.~\ref{conn16}.
This, together with unitarity of the
left hand side of Fig.~\ref{conn16}, shows bi-unitarity of
the connection $W_1(\lambda,\mu)$.

We now prove that the fusion category of endomorphisms of
$N$ arising from $\Delta$ and the one arising from the
connections $W_1(\lambda,\mu)$ for various $\lambda$ and
fixed $\mu$ are equivalent.  This is where we need
Assumption \ref{assump}.
Then we have the following theorem.

\begin{theorem}
\label{equiv0}
Under Assumptions \ref{assum0} and \ref{assump},
the fusion category arising from the 
connections $W_1(\lambda,\mu)$
for all $\lambda\in\Delta$
is equivalent to the one arising from 
endomorphisms $\lambda\in\Delta$ of $N$.
\end{theorem}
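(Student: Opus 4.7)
The plan is to exhibit a tensor equivalence $F$ sending $\lambda\in\Delta$ to the connection $W_1(\lambda,\mu)$, and to verify this on objects, tensor structure, and morphisms separately, using Assumption \ref{assump} at exactly the point where essential surjectivity/faithfulness is needed.

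First I would verify that $F$ is a tensor functor. The key identity is $W_1(\lambda_1\lambda_2,\mu)\cong W_1(\lambda_1,\mu)\cdot W_1(\lambda_2,\mu)$, where the dot denotes vertical composition of connections (stacking along the horizontal $\mu$-edges). Using the defining formula in Fig.~\ref{conn2}, this reduces to inserting a resolution of identity $\sum_{\nu}\sum_{U}UU^{*}=1$ with $U$ ranging over isometries in $\Hom(\lambda_2\nu_1,\nu)$ inside $T_4 T_3 \lambda_1\lambda_2(T_1^*)T_2^*$, and then recognising each summand, after regrouping the isometries and using the $6j$-symbol graphical calculus of \cite[Section 3]{BEK1}, as the product of two $W_1$-entries with intermediate vertex $\nu$. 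For the unit, $W_1(\id,\mu)$ is immediately seen to be the Kronecker connection.

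Second, on morphisms, an intertwiner $s\in\Hom(\lambda,\lambda')$ is sent to the morphism of connections given by capping the left vertical leg with $s$; compatibility with composition and tensor product in both categories is routine from the graphical calculus. To show fullness and faithfulness, I would argue that a morphism between $W_1(\lambda,\mu)$ and $W_1(\lambda',\mu)$ is a family of matrices indexed by the vertex set $\Delta$ intertwining the connection matrix entries. By crossing symmetry (Fig.~\ref{conn16}) and bi-unitarity, such a family is determined by its value at a single vertex, say $\id$, once we know how it transports along edges. Assumption \ref{assump} is precisely what guarantees that one can propagate this datum coherently to every vertex of $\Delta$: since the bipartite graph with edges $\dim\Hom(\nu_1\mu,\nu_2)$ is connected, every $\nu\in\Delta$ is reachable from $\id$ by an alternating sequence of $\mu$ and $\bar\mu$ tensor products, and Frobenius reciprocity then identifies the intertwiner space with $\Hom(\lambda,\lambda')$.

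Finally, for essential surjectivity, I would observe that the fusion category generated by $\{W_1(\lambda,\mu)\}_{\lambda\in\Delta}$ under composition, direct sum, and subobjects consists of connections obtained by decomposing products $W_1(\lambda_1,\mu)\cdots W_1(\lambda_k,\mu)$; by the tensor identity above these are carried bijectively, up to equivalence, onto the images $W_1(\nu,\mu)$ of irreducible $\nu\in\Delta$. The main obstacle is the faithfulness step: one must rule out extraneous morphisms of connections coming from the redundancy in the choice of intertwiner bases $\{T_i\}$. The cleanest way is to observe that any such morphism is completely determined by its commutation with a single $W_1(\mu',\mu)$ for any generating $\mu'$, and that connectedness in Assumption \ref{assump} forces the resulting linear map on the common vertex set to come from a genuine endomorphism intertwiner, in analogy with the irreducibility arguments underlying the flat connection constructions of Ocneanu \cite{O1}, \cite{O2}.
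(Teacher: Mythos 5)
Your overall skeleton (a tensor functor $\lambda\mapsto W_1(\lambda,\mu)$, with the identity $W_1(\lambda_1\lambda_2,\mu)\cong W_1(\lambda_1,\mu)\cdot W_1(\lambda_2,\mu)$ obtained by inserting a resolution of the identity) is consistent with what the paper does, and that step is fine. The genuine gap is in your fullness/faithfulness step, which is where the entire content of the theorem lives. You assert that a morphism between connections is ``determined by its value at a single vertex, once we know how it transports along edges,'' that connectedness in Assumption \ref{assump} lets you propagate this datum, and that the propagated datum ``comes from a genuine endomorphism intertwiner, in analogy with'' Ocneanu's flatness arguments. This is not a proof: the transport data along edges is exactly the unknown; path-independence of the propagation is precisely the nontrivial consistency condition (a morphism of connections is a solution of a linear system indexed by all cells, not a vertex datum plus transport); and even granting determination by one vertex you only get an upper-bound/injectivity statement, not the equality $\dim\Hom(W_1(\lambda_1,\mu)W_1(\lambda_2,\mu),W_1(\lambda_3,\mu))=\dim\Hom(\lambda_1\lambda_2,\lambda_3)$ that fullness requires. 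Morphism spaces in the connection category are moreover not naive families of vertex matrices: they are intertwiners of the associated open string bimodules, and computing them is a substantive result, not something that follows from bi-unitarity and crossing symmetry.

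The paper closes exactly this gap by citation rather than propagation: by the proof of \cite[Theorem 3]{AH}, the morphism spaces between (composites of) the connections $W_1(\lambda,\mu)$ are identified with higher relative commutants of the corresponding subfactor, and by the proof of \cite[Theorem 3.3]{S} these higher relative commutants (the $C_{k,-1}$ for $M_0\subset M_1$) are identified with intertwiner spaces of $N$-$N$ morphisms; the resulting dimension count forces the natural injective map $\Hom(\lambda_1\lambda_2,\lambda_3)\to\Hom(W_1(\lambda_1,\mu)W_1(\lambda_2,\mu),W_1(\lambda_3,\mu))$ to be surjective, and compatibility with composition gives the equivalence. Your essential-surjectivity paragraph silently uses fullness (to match irreducible decompositions of composite connections with those of $\lambda_1\lambda_2$), so it inherits the same gap. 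Note also that the structural role of Assumption \ref{assump} is slightly different from what you suggest: it guarantees that the horizontal graphs of $W_1(\lambda,\mu)$ (whose edge multiplicities are $\dim\Hom(\nu_1\mu,\nu_2)$) are connected, which is what makes the open-string-bimodule machinery and the usual composition of connections applicable in the first place, not merely reachability of vertices in a propagation scheme.
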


\begin{proof}
By the description of the intertwiners between open string
bimodules in the proof of \cite[Theorem 3]{AH},
we have a natural injective linear map from 
$\Hom(\lambda_1\lambda_2,\lambda_3)$
to $\Hom(W_1(\lambda_1,\mu))W_1(\lambda_2,\mu)),W_1(\lambda_3,\mu))$
for $\lambda_1,\lambda_2,\lambda_3\in\Delta$.
The morphism space 
$\Hom(W_1(\lambda_1,\mu))W_1(\lambda_2,\mu)),W_1(\lambda_3,\mu))$
is described with the higher relative commutants of the corresponding
subfactor by the proof of \cite[Theorem 3]{AH}.  These higher relative
commutants are described with $C_{k,-1}$ for the
subfactor $M_0\subset M_1$ in the proof of
\cite[Theorem 3.3]{S}.  They are described with the intertwiner 
spaces of the $N$-$N$ morphisms as in the proof of
\cite[Theorem 3.3]{S}.  This means that the dimensions of the
two intertwiner spaces $\Hom(\lambda_1\lambda_2,\lambda_3)$
and $\Hom(W_1(\lambda_1,\mu))W_1(\lambda_2,\mu)),W_1(\lambda_3,\mu))$
are the same, and thus the above natural linear map is surjective.
The compositions of intertwiners in the two fusion categories
are also compatible with this identification, so
they are equivalent.
\end{proof}

Then we see that the structure of the \textit{braided} fusion
category of these endomorphisms of $N$ passes to that of
these connections.


We next introduce two types of connections.  The first is
the easier one $W_2(\mu)$.

\begin{definition}\label{W2}{\rm
Let $\nu_1,\nu_2\in\Delta$ and $a_1,a_2$ be irreducible
$M$-$N$ morphisms arising from $\Delta$ and the subfactor $N\subset M$.
Fig.~\ref{conn3g} represents the complex number
$T_4 T_3 \iota(T_1)^* T_2^*\in\Hom(a_2,a_2)$
for isometries $T_1\in\Hom(\nu_1\mu,\nu_2)$,
$T_2\in\Hom(\iota\nu_2,a_2)$,
$T_3\in\Hom(\iota\nu_1,a_1)$,
$T_4\in\Hom(a_1\nu_2,a_2)$, where
$a_1$ and $a_2$ are $M$-$N$ morphisms.  We define the
connection $W_2(\mu)$ by this number, and use
the diagram as in Fig.~\ref{conn4a} to represent this
connection.
}\end{definition}

Note that Remark after Definition \ref{W1} applies here again.
For example, the left vertical bipartite
graph in this Definition is defined as follows. 
One set of vertices is
given by $\Delta$ and the other is given by the representatives
of the irreducible $M$-$N$ morphisms.
The number of edges between the vertices $\nu_1\in\Delta$ 
and an $M$-$N$ morphism $a_1$ is given
by $\dim\Hom(\iota\nu_1,a_1)$. 

\begin{figure}[H]
\begin{center}
\begin{tikzpicture}[scale=2.5]
\draw [-to](1.2,1)--(1.8,1);
\draw [-to](1.2,2)--(1.8,2);
\draw [-to](2,1.8)--(2,1.2);
\draw [-to](1,1.8)--(1,1.2);
\draw (1,1)node{$a_1\mu$};
\draw (2,1)node{$a_2$};
\draw (1,2)node{$\iota\nu_1\mu$};
\draw (2,2)node{$\iota\nu_2$};
\draw (1.5,2)node[above]{$\iota(T_1)$};
\draw (1.5,1)node[below]{$T_4$};
\draw (1,1.5)node[left]{$T_3$};
\draw (2,1.5)node[right]{$T_2$};
\end{tikzpicture}
\caption{The diagram for the connection $W_2(\mu)$}
\label{conn3g}
\end{center}
\end{figure}

\begin{figure}[H]
\begin{center}
\begin{tikzpicture}
\draw [-to](1,1)--(2,1);
\draw [-to](1,2)--(2,2);
\draw [-to](1,2)--(1,1);
\draw [-to](2,2)--(2,1);
\draw (1,1)node[below left]{$a_1$};
\draw (2,1)node[below right]{$a_2$};
\draw (1,2)node[above left]{$\nu_1$};
\draw (2,2)node[above right]{$\nu_2$};
\end{tikzpicture}
\caption{The standard diagram for a connection $W_2(\mu)$}
\label{conn4a}
\end{center}
\end{figure}


We next introduce the other connection, $W_3(\lambda)$.

\begin{definition}\label{W3}{\rm
Let $\nu_1,\nu_2\in\Delta$ and $a_1,a_2$ be irreducible
$M$-$N$ morphisms arising from $\Delta$ and the 
subfactor $N\subset M$.
Fig.~\ref{conn3e} represents the complex number
$T_4 \alpha_\lambda^+(T_3) 
\E^+(\lambda,\iota)^* \iota(T_1)^* T_2^*\in\Hom(a_2,a_2)$
for isometries $T_1\in\Hom(\lambda\nu_1,\nu_2)$,
$T_2\in\Hom(\iota\nu_2,\nu_4)$,
$T_3\in\Hom(\iota\nu_1,a_1)$,
$T_4\in\Hom(\alpha_\lambda^+ a_1,a_2)$, which is represented with
the connection diagram in Fig.~\ref{conn4}.  We define the
connection $W_3(\lambda)$ by this number.
Here $\E^\pm$ is defined on page 455 below (14) in \cite{BEK1}
and we recall Fig.~\ref{conn5}, which is taken from
\cite[Fig.~30]{BEK1}.
}\end{definition}

\begin{figure}[H]
\begin{center}
\begin{tikzpicture}[scale=2.5]
\draw [-to](1.2,1)--(1.8,1);
\draw [-to](1.2,2)--(1.8,2);
\draw [-to](1,1.8)--(1,1.7);
\draw [-to](1,1.3)--(1,1.2);
\draw [-to](2,1.8)--(2,1.2);
\draw (1,1)node{$\alpha^+_\lambda a_1$};
\draw (1,1.5)node{$\alpha^+_\lambda\iota\nu_1$};
\draw (2,1)node{$a_2$};
\draw (1,2)node{$\iota\lambda\nu_1$};
\draw (2,2)node{$\iota\nu_2$};
\draw (1.5,2)node[above]{$\iota(T_1)$};
\draw (1.5,1)node[below]{$T_4$};
\draw (1,1.75)node[left]{$\E^+(\lambda,\iota)^*$};
\draw (1,1.25)node[left]{$\alpha_\lambda^+(T_3)$};
\draw (2,1.5)node[right]{$T_2$};
\end{tikzpicture}
\caption{The diagram for the connection $W_3(\lambda)$}
\label{conn3e}
\end{center}
\end{figure}

\begin{figure}[H]
\begin{center}
\begin{tikzpicture}
\draw [-to](1,1)--(2,1);
\draw [-to](1,2)--(2,2);
\draw [-to](1,2)--(1,1);
\draw [-to](2,2)--(2,1);
\draw (1,1)node[below left]{$a_1$};
\draw (2,1)node[below right]{$a_2$};
\draw (1,2)node[above left]{$\nu_1$};
\draw (2,2)node[above right]{$\nu_2$};
\end{tikzpicture}
\caption{The standard diagram for a connection $W_3(\lambda)$}
\label{conn4}
\end{center}
\end{figure}

\begin{figure}[H]
\begin{center}
\begin{tikzpicture}
\draw [thick,-to](1,2)--(1.4,1.6);
\draw [thick,-to](1.6,1.4)--(2,1);
\draw [-to](2,2)--(1.5,1.5);
\draw [ultra thick,-to](1.5,1.5)--(1,1);
\draw (1,1)node[below]{$\alpha_\lambda^+$};
\draw (2,1)node[below]{$\iota$};
\draw (1,2)node[above]{$\iota$};
\draw (2,2)node[above]{$\lambda$};
\end{tikzpicture}
\caption{The braiding operator $\E^+(\lambda,\iota)^*$}
\label{conn5}
\end{center}
\end{figure}

\begin{figure}[H]
\begin{center}
\begin{tikzpicture}[scale=0.6]
\draw [thick](5,11.5) arc (0:180:1.5);
\draw [thick](2,10.5) arc (90:180:1);
\draw (3,9.5) arc (0:90:1);
\draw (4,8.5) arc (0:180:1);
\draw (3,5.5) arc (270:360:1);
\draw [thick](2,6.5) arc (180:270:1);
\draw [thick](2,3.5) arc (270:360:1);
\draw [ultra thick](1,4.5) arc (180:270:1);
\draw [thick](2,2.5) arc (180:360:1.5);
\draw [thick](5,2.5)--(5,11.5);
\draw [thick](2,11.5)--(2,10.5);
\draw [thick](1,9.5)--(1,7.5);
\draw [thick](3,5.5)--(3,4.5);
\draw [thick](2,3.5)--(2,2.5);
\draw [ultra thick](1,6.5)--(1,4.5);
\draw (4,6.5)--(4,8.5);
\draw (2,7.5)--(2,8.5);
\draw [ultra thick](1,6.5)--(1.5,7);
\draw (1.5,7)--(2,7.5);
\draw [thick](1,7.5)--(1.4,7.1);
\draw [thick](1.6,6.9)--(2,6.5);
\draw (2,2.7)node[left]{$a_2$};
\draw (3,10.5)node{$\nu_2$};
\draw (1,8)node[left]{$\iota$};
\draw (2,8)node[right]{$\lambda$};
\draw (4,8)node[right]{$\nu_1$};
\draw (1,5)node[left]{$\alpha_\lambda^+$};
\draw (3,4.7)node[right]{$a_1$};
\end{tikzpicture}
\caption{The diagram for the connection $W_3(\lambda)$
in Fig.~\ref{conn4}}
\label{conn6}
\end{center}
\end{figure}

The complex number represented by the diagram in Fig.~\ref{conn6}
is equal to the one represented by the connection diagram in
Fig.~\ref{conn4} multiplied by
$\sqrt{d_\iota d_\lambda d_{\nu_1} d_{a_2}}$
by the standard convention in Fig.~\ref{triple}.
Note that we drop orientations of wires which go
from the top to the bottom in Fig.~\ref{conn6}.

We make a vertical reflection of Fig.~\ref{conn6} to get
Fig.~\ref{conn7}.  We again drop orientations of wires which
go from the top to the bottom.
Note that the complex number values given
by these two diagrams are complex conjugate to each other.

\begin{figure}[H]
\begin{center}
\begin{tikzpicture}[scale=0.6]
\draw [thick](5,11.5) arc (0:180:1.5);
\draw [ultra thick](2,10.5) arc (90:180:1);
\draw [thick](3,9.5) arc (0:90:1);
\draw [thick](3,8.5) arc (90:180:1);
\draw [thick](1,4.5) arc (180:270:1);
\draw (4,7.5) arc (0:90:1);
\draw (2,5.5) arc (180:360:1);
\draw (2,3.5) arc (270:360:1);
\draw [thick](2,2.5) arc (180:360:1.5);
\draw [thick](5,11.5)--(5,2.5);
\draw [ultra thick](1,7.5)--(1,9.5);
\draw [thick](1,4.5)--(1,6.5);
\draw [thick](2,10.5)--(2,11.5);
\draw [thick](2,2.5)--(2,3.5);
\draw (2,5.5)--(2,6.5);
\draw (4,5.5)--(4,7.5);
\draw [thick](3,8.5)--(3,9.5);
\draw [thick](1,6.5)--(1.4,6.9);
\draw [thick](1.6,7.1)--(2,7.5);
\draw [ultra thick](1,7.5)--(1.5,7);
\draw (1.5,7)--(2,6.5);
\draw (2,11)node[left]{$a_2$};
\draw (3,9)node[right]{$a_1$};
\draw (1,9)node[left]{$\alpha^+_\lambda$};
\draw (1,6)node[left]{$\iota$};
\draw (2,6)node[right]{$\lambda$};
\draw (4,6)node[right]{$\nu_1$};
\draw (3,3.5)node{$\nu_2$};
\end{tikzpicture}
\caption{A vertical reflection of Fig.~\ref{conn6}}
\label{conn7}
\end{center}
\end{figure}

By redrawing Fig.~\ref{conn7}, we have Fig.~\ref{conn8}.

\begin{figure}[H]
\begin{center}
\begin{tikzpicture}[scale=0.6]
\draw [thick](5,11.5) arc (0:180:1.5);
\draw [thick](2,10.5) arc (90:180:1);
\draw (3,9.5) arc (0:90:1);
\draw (4,8.5) arc (0:180:1);
\draw (3,5.5) arc (270:360:1);
\draw [thick](2,6.5) arc (180:270:1);
\draw [thick](2,3.5) arc (270:360:1);
\draw [ultra thick](1,4.5) arc (180:270:1);
\draw [thick](2,2.5) arc (180:360:1.5);
\draw [thick](5,2.5)--(5,11.5);
\draw [thick](2,11.5)--(2,10.5);
\draw [thick](1,9.5)--(1,7.5);
\draw [thick](3,5.5)--(3,4.5);
\draw [thick](2,3.5)--(2,2.5);
\draw [ultra thick](1,6.5)--(1,4.5);
\draw (4,6.5)--(4,8.5);
\draw (2,7.5)--(2,8.5);
\draw [ultra thick](1,6.5)--(1.5,7);
\draw (1.5,7)--(2,7.5);
\draw [thick](1,7.5)--(1.4,7.1);
\draw [thick](1.6,6.9)--(2,6.5);
\draw (2,2.7)node[left]{$a_1$};
\draw (3,10.5)node{$\nu_1$};
\draw (1,8)node[left]{$\iota$};
\draw (2,8)node[right]{$\bar\lambda$};
\draw (4,8)node[right]{$\nu_2$};
\draw (1,5)node[left]{$\alpha_{\bar\lambda}^+$};
\draw (3,4.7)node[right]{$a_2$};
\end{tikzpicture}
\caption{Redrawing of Fig.~\ref{conn7}}
\label{conn8}
\end{center}
\end{figure}

The complex number represented 
by the diagram in Fig.~\ref{conn8} is
equal to the one represented by the connection diagram
in Fig.~\ref{conn9} multiplied by
$\sqrt{d_\lambda d_\iota d_{\nu_2} d_{a_1}}$
This means that the value of the connection diagram
in Fig.\ref{conn9} for $W_3(\bar\lambda)$
is equal to the complex conjugate of the
value of the connection diagram in Fig.\ref{conn4} for
$W_3(\lambda)$ multiplied by
$\displaystyle\sqrt{\frac{d_{\nu_1}d_{a_2}}{d_{\nu_2}d_{a_1}}}$.
This proves bi-unitarity of $W_3(\lambda)$, as
Fig.~\ref{conn16} gave bi-unitarity of $W_1(\lambda,\mu)$.

\begin{figure}[H]
\begin{center}
\begin{tikzpicture}
\draw [-to](1,1)--(2,1);
\draw [-to](1,2)--(2,2);
\draw [-to](1,2)--(1,1);
\draw [-to](2,2)--(2,1);
\draw (1,1)node[below left]{$a_2$};
\draw (2,1)node[below right]{$a_1$};
\draw (1,2)node[above left]{$\nu_2$};
\draw (2,2)node[above right]{$\nu_1$};
\end{tikzpicture}
\caption{The standard diagram for a connection $W_3(\bar\lambda)$}
\label{conn9}
\end{center}
\end{figure}


We next introduce the connection $W_4(\alpha^+_\lambda,\mu)$,
which is the $\alpha^+$-induced connection.

\begin{definition}\label{W4}{\rm
Let $a_1,a_2,a_3,a_4$ be irreducible $M$-$N$ morphisms
arising from $\Delta$ and the subfactor $N\subset M$.
Consider the diagram in Fig.~\ref{conn2x}.
By composing isometries $T_1\in\Hom(a_1\mu,a_2)$,
$T_2\in\Hom(\alpha_\lambda^+ a_2,a_4)$,
$T_3\in\Hom(\alpha_\lambda^+ a_1,a_3)$, and
$T_4\in\Hom(a_3\mu,a_4)$, we obtain a complex number
$T_4 T_3 \alpha_\lambda^+(T_1^*)T_2^*\in\Hom(a_4,a_4)$.
We define the connection $W_4(\alpha^+_\lambda,\mu)$ by 
this number and represent this as in Fig.~\ref{conn11}.
}\end{definition}

\begin{figure}[H]
\begin{center}
\begin{tikzpicture}[scale=2.5]
\draw [-to](1.2,1)--(1.8,1);
\draw [-to](1.2,2)--(1.8,2);
\draw [-to](1,1.8)--(1,1.2);
\draw [-to](2,1.8)--(2,1.2);
\draw (1,1)node{$a_3\mu$};
\draw (2,1)node{$a_4$};
\draw (1,2)node{$\alpha_\lambda^+ a_1\mu$};
\draw (2,2)node{$\alpha_\lambda^+ a_2$};
\draw (1.5,2)node[above]{$\alpha_\lambda^+(T_1)$};
\draw (1.5,1)node[below]{$T_4$};
\draw (1,1.5)node[left]{$T_3$};
\draw (2,1.5)node[right]{$T_2$};
\end{tikzpicture}
\caption{The diagram for the connection $W_4(\alpha_\lambda^+,\mu)$
in Fig.~\ref{conn1}}
\label{conn2x}
\end{center}
\end{figure}

\begin{figure}[H]
\begin{center}
\begin{tikzpicture}
\draw [-to](1,1)--(2,1);
\draw [-to](1,2)--(2,2);
\draw [-to](1,2)--(1,1);
\draw [-to](2,2)--(2,1);
\draw (1,1)node[below left]{$a_3$};
\draw (2,1)node[below right]{$a_4$};
\draw (1,2)node[above left]{$a_1$};
\draw (2,2)node[above right]{$a_2$};
\end{tikzpicture}
\caption{The standard diagram for a connection $W_4(\alpha^+_\lambda,\mu)$}
\label{conn11}
\end{center}
\end{figure}

By a very similar argument to the one for
bi-unitarity of $W_1(\lambda,\mu)$, we obtain 
bi-unitarity of $W_4(\alpha^+_\lambda,\mu)$.


We then have the intertwining Yang-Baxter equation for
$W_1(\lambda,\mu)$, $W_3(\lambda)$, $W_3(\mu)$, and
$W_4(\alpha^+_\lambda,\mu)$ as in Fig.~\ref{conn17}, which
was given in \cite[Axiom 7]{K2}.  The meaning of this
diagram is as follows.  On the both hand sides of
the identity, the six isometries are fixed for the 
six boundary edges of the hexagons in the same way.  The left
hand side means the summation of the product of
the three connection values $W_1(\lambda,\mu)$,
$W_3(\lambda)$, and $W_2(\mu)$ over all possible
choices of the three isometries corresponding to
the three internal edges of the hexagon.   The right
hand side means the summation of the product of
the three connection values 
$W_3(\lambda)$, $W_2(\mu)$, and 
$W_4(\alpha^+_\lambda,\mu)$ over all possible
choices of the three isometries corresponding to
the three internal edges of the hexagon.  The
both hand sides are equal because they are both equal
to the composition of the six (co-)isometries
corresponding to the six boundary edges of the hexagons.

\begin{figure}[H]
\begin{center}
\begin{tikzpicture}[scale=1.5]
\draw [-to](1,1)--(2.5,1);
\draw [-to](1,2.5)--(2.5,2.5);
\draw [-to](1,2.5)--(1,1);
\draw [-to](2.5,2.5)--(2.5,1);
\draw [-to](2,3.5)--(1,2.5);
\draw [-to](3.5,3.5)--(2.5,2.5);
\draw [-to](3.5,2)--(2.5,1);
\draw [-to](2,3.5)--(3.5,3.5);
\draw [-to](3.5,3.5)--(3.5,2);
\draw (4,2.25)node{$=$};
\draw [-to](4.5,1)--(6,1);
\draw [-to](4.5,2.5)--(4.5,1);
\draw [-to](5.5,3.5)--(4.5,2.5);
\draw [-to](7,2)--(6,1);
\draw [-to](5.5,3.5)--(7,3.5);
\draw [-to](7,3.5)--(7,2);
\draw [-to](5.5,2)--(7,2);
\draw [-to](5.5,3.5)--(5.5,2);
\draw [-to](5.5,2)--(4.5,1);
\draw (1.75,1.75)node{$W_2(\mu)$};
\draw (2.25,3)node{$W_1(\lambda,\mu)$};
\draw (3,2.25)node{$W_3(\lambda)$};
\draw (5,2.25)node{$W_3(\lambda)$};
\draw (5.75,1.5)node{$W_4(\alpha^+_\lambda,\mu)$};
\draw (6.25,2.75)node{$W_2(\mu)$};
\end{tikzpicture}
\caption{The intertwining Yang-Baxter equation for
$W_1(\lambda,\mu)$, $W_3(\lambda)$, $W_3(\mu)$, and
$W_4(\alpha^+_\lambda,\mu)$}
\label{conn17}
\end{center}
\end{figure}


We have seen $\alpha$-induction applied to $N\subset M$
produces bi-unitary connections.  From this construction
and the description of intertwiner spaces for connections
as in \cite[Theorem 3]{AH} again, we have the following theorem
under Assumption \ref{assump} with the same arguments as in
the proof of Theorem \ref{equiv0}.

\begin{theorem}
\label{equiv}
Under Assumptions \ref{assum0} and \ref{assump} for a fixed $\mu$,
the fusion category arising from the 
connections $W_4(\alpha_\lambda^\pm,\mu)$
for all $\lambda\in\Delta$
is equivalent to the one arising from 
endomorphisms $\alpha_\lambda^\pm$
of $M$ given by $\alpha$-induction.
\end{theorem}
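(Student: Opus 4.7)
The plan is to mimic the proof of Theorem \ref{equiv0} verbatim, substituting the $M$-$M$ endomorphisms $\alpha_\lambda^\pm$ for the $N$-$N$ endomorphisms $\lambda\in\Delta$ and the connections $W_4(\alpha_\lambda^\pm,\mu)$ for $W_1(\lambda,\mu)$. The first step is to construct a natural linear map
\[
\Phi:\Hom(\alpha_{\lambda_1}^\pm\alpha_{\lambda_2}^\pm,\alpha_{\lambda_3}^\pm)\longrightarrow \Hom(W_4(\alpha_{\lambda_1}^\pm,\mu)\,W_4(\alpha_{\lambda_2}^\pm,\mu),W_4(\alpha_{\lambda_3}^\pm,\mu)),
\]
which sends an intertwiner $T$ to the connection-category morphism it induces via the isometry data that defines $W_4$ in Definition \ref{W4}. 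As in the proof of Theorem \ref{equiv0}, the Asaeda-Haagerup description of intertwiners between open string bimodules coming from the proof of \cite[Theorem 3]{AH} makes $\Phi$ well-defined and injective.

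For surjectivity I would reuse the dimension count from Theorem \ref{equiv0}: the codomain of $\Phi$ is expressed, via the proof of \cite[Theorem 3]{AH}, as a higher relative commutant of the subfactor built from $W_4$, and Sato's argument in the proof of \cite[Theorem 3.3]{S} identifies these relative commutants (the spaces $C_{k,-1}$) with the intertwiner space in the target fusion category, which here is the one of $\alpha$-induced $M$-$M$ morphisms. Since this target intertwiner space is precisely $\Hom(\alpha_{\lambda_1}^\pm\alpha_{\lambda_2}^\pm,\alpha_{\lambda_3}^\pm)$, dimensions match and $\Phi$ is an isomorphism. Compatibility of composition follows from the fact that $W_4$ is designed so that its vertical composition realizes the composition of $\alpha$-induced endomorphisms, consistently with the intertwining Yang-Baxter equation of Figure \ref{conn17}.

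The step I expect to need the most care is the verification that the hypotheses of Sato's argument are in place for the connection $W_4$. The horizontal top graph of $W_4$ has vertex set indexed by irreducible $M$-$N$ morphisms rather than by elements of $\Delta$, so the connectedness hypothesis in Assumption \ref{assump}, stated for the $\Delta$-graph with edge multiplicities $\dim\Hom(\nu_1\mu,\nu_2)$, must be transferred to the graph with edge multiplicities $\dim\Hom(a\mu,b)$. Since every irreducible $M$-$N$ morphism is a subobject of some $\iota\nu$ with $\nu\in\Delta$, Frobenius reciprocity couples the $M$-$N$ graph to the $\Delta$-graph through $\iota$, and connectedness propagates. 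Once this point is settled, the rest of the argument is formally identical to that of Theorem \ref{equiv0}, and the same reasoning yields the equivalence simultaneously for both signs $\pm$ of $\alpha$-induction.
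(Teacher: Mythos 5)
Your proposal follows essentially the same route as the paper: the paper's proof of Theorem \ref{equiv} consists precisely of repeating the argument of Theorem \ref{equiv0} --- the natural injective linear map built from the isometry data defining $W_4(\alpha_\lambda^\pm,\mu)$, the identification of the connection morphism spaces with higher relative commutants via the proof of \cite[Theorem 3]{AH}, and Sato's description of those commutants (the spaces $C_{k,-1}$ in the proof of \cite[Theorem 3.3]{S}) as the intertwiner spaces of the $\alpha$-induced $M$-$M$ morphisms, yielding the dimension count for surjectivity and compatibility with composition. Your extra verification that connectedness transfers from the $\Delta$-graph of Assumption \ref{assump} to the graph on irreducible $M$-$N$ morphisms (via Frobenius reciprocity and the fact that every such morphism is a subobject of some $\iota\nu$) is a detail the paper leaves implicit, and your handling of it is sound.
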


Note that it is nontrivial that we have only finite many
irreducible endomorphisms of $M$ up to equivalence when
we consider those arising from $\alpha_\lambda^\pm$.  This
has been proved in  \cite[Theorem 5.10]{BEK1}.

\section{Rewriting $W_4(\alpha^+_\lambda,\mu)$ and switching of
positive and negative braiding}

Though we have defined the $\alpha$-induced connection
$W_4(\alpha^+_\lambda,\mu)$ as in Fig. \ref{conn2x}, we
need to know full information about $\alpha^+_\lambda$
to compute this value.  Since we are now in the process
of defining the new connection $W_4(\alpha^+_\lambda,\mu)$
before knowing $\alpha^+_\lambda$, we certainly hope to
compute this number without using the information about
$\alpha^+_\lambda$.  We show in this Section that this is possible.

Fig.~\ref{conn10} represents the complex number
\[
T_4 T'_3\E^+(\lambda,a_1) \alpha_\lambda^+(T_1)^* 
\E^+(\lambda,a_2)^* {T'_2}^*\in\Hom(a_4,a_4)
\]
for isometries $T_1\in\Hom(a_1\mu,a_2)$,
$T'_2\in\Hom(a_2\lambda,a_4)$,
$T'_3\in\Hom(a_1\lambda,a_3)$,
$T_4\in\Hom(a_3\mu,a_4)$, which is represented with
the connection diagram in Fig.~\ref{conn11}.
This composition corresponds to the diagram in
Fig.~\ref{conn12} up to normalization constant, and
we can redraw this as in Fig.~\ref{conn13}.
Note that we drop orientations of wires which go
from the top to the bottom in Fig.~\ref{conn12}, \ref{conn13}. 
The complex number represented by the diagram in Fig.~\ref{conn13}
is equal to the one represented by the connection diagram in
Fig.~\ref{conn10} multiplied by
$\sqrt{d_\lambda d_\mu  d_{a_1} d_{a_4}}$.
Since Fig.~\ref{conn13} does not involve $\alpha_\lambda^+$, this
diagram gives $W_4(\alpha_\lambda^+,\mu)$, up to normalization
constant, in terms of $N$-$N$ and $M$-$N$ morphisms and
intertwiners including the braiding operators.  This is
what we asked for at the beginning of this Section.

\begin{figure}[H]
\begin{center}
\begin{tikzpicture}[scale=2.5]
\draw [-to](1.2,1)--(1.8,1);
\draw [-to](1.2,2)--(1.8,2);
\draw [-to](1,1.8)--(1,1.7);
\draw [-to](1,1.3)--(1,1.2);
\draw [-to](2,1.8)--(2,1.7);
\draw [-to](2,1.3)--(2,1.2);
\draw (1,1)node{$a_3\mu$};
\draw (1,1.5)node{$a_1\lambda\mu$};
\draw (1,2)node{$\alpha_\lambda^+ a_1\mu$};
\draw (2,1)node{$a_4$};
\draw (2,1.5)node{$a_2\lambda $};
\draw (2,2)node{$\alpha_\lambda^+ a_2$};
\draw (1.5,2)node[above]{$\alpha_\lambda^+(T_1)$};
\draw (1.5,1)node[below]{$T_4$};
\draw (1,1.75)node[left]{$\E^+(\lambda,a_1)$};
\draw (1,1.25)node[left]{$T'_3$};
\draw (2,1.75)node[right]{$\E^+(\lambda,a_2)$};
\draw (2,1.25)node[right]{$T'_2$};
\end{tikzpicture}
\caption{The diagram for the connection $W_4(\alpha^+_\lambda,\mu)$}
\label{conn10}
\end{center}
\end{figure}

\begin{figure}[H]
\begin{center}
\begin{tikzpicture}[scale=0.6]
\draw [thick](6,13.5) arc (0:180:1.75);
\draw [thick](2.5,13.5) arc (90:180:1.5);
\draw (4,12) arc (0:90:1.5);
\draw (3.5,2.5) arc (270:360:1.5);
\draw (5,7) arc (0:90:1);
\draw (2,4) arc (270:360:1);
\draw [thick](4,8) arc (90:180:1);
\draw [thick](2,4) arc (180:270:1.5);
\draw [thick](3.5,2.5) arc (180:360:1.25);
\draw [thick](1,5) arc (180:270:1);
\draw [thick](1,5)--(1.9,5.9);
\draw [thick](2.1,6.1)--(3,7);
\draw [thick](1,12)--(2.4,10.6);
\draw [thick](2.6,10.4)--(4,9);
\draw [ultra thick](2,6)--(1,7);
\draw [ultra thick](1,7)--(1,9);
\draw [ultra thick](1,9)--(2.5,10.5);
\draw (2.5,10.5)--(4,12);
\draw (5,4)--(5,7);
\draw [thick](4,8)--(4,9);
\draw (3,5)--(2,6);
\draw [thick](6,2.5)--(6,13.5);
\draw (4,13.5)node{$\lambda$};
\draw (1,13.5)node{$a_2$};
\draw (1,8)node[left]{$\alpha_\lambda^+$};
\draw (1,4)node{$a_1$};
\draw (3,4)node{$\lambda$};
\draw (5,4)node[left]{$\mu$};
\draw (2,2.5)node{$a_3$};
\draw (3.5,1.25)node{$a_4$};
\end{tikzpicture}
\caption{The diagram for the connection in Fig.~\ref{conn10}}
\label{conn12}
\end{center}
\end{figure}

\begin{figure}[H]
\begin{center}
\begin{tikzpicture}[scale=0.8]
\draw [thick](5,7) arc (0:180:1);
\draw [thick](3,7) arc (90:180:1);
\draw [thick](2,6) arc (90:180:1);
\draw [thick](1,4) arc (180:270:1);
\draw [thick](2,3) arc (180:270:1);
\draw [thick](3,2) arc (180:360:1);
\draw [thick](1,4)--(1,5);
\draw [thick](5,2)--(5,7);
\draw (4,6) arc (0:90:1);
\draw (3,5) arc (0:90:1);
\draw (2,3) arc (270:360:1);
\draw (3,2) arc (270:360:1);
\draw (4,3)--(4,4);
\draw (4,5)--(4,6);
\draw (3,4)--(4,5);
\draw (3,5)--(3.4,4.6);
\draw (3.6,4.4)--(4,4);
\draw (3,8)node{$a_4$};
\draw (2,7)node{$a_2$};
\draw (1,4.5)node[left]{$a_1$};
\draw (4,3.5)node[right]{$\mu$};
\draw (4,5.5)node[right]{$\lambda$};
\draw (2,2)node{$a_3$};
\end{tikzpicture}
\caption{Redrawing of Fig.~\ref{conn12}}
\label{conn13}
\end{center}
\end{figure}


We next study what the effect of switching the positive
and negative braiding is.  Vertical reflection of
Fig.~\ref{conn13} gives Fig.~\ref{conn14a}.  The complex
numbers given by these two Figures are mutually complex
conjugate.  By comparing these two Figures, we have the
following Proposition.

\begin{figure}[H]
\begin{center}
\begin{tikzpicture}[scale=0.8]
\draw [thick](5,7) arc (0:180:1);
\draw [thick](3,7) arc (90:180:1);
\draw [thick](2,6) arc (90:180:1);
\draw [thick](1,4) arc (180:270:1);
\draw [thick](2,3) arc (180:270:1);
\draw [thick](3,2) arc (180:360:1);
\draw [thick](1,4)--(1,5);
\draw [thick](5,2)--(5,7);
\draw (4,6) arc (0:90:1);
\draw (3,5) arc (0:90:1);
\draw (2,3) arc (270:360:1);
\draw (3,2) arc (270:360:1);
\draw (4,3)--(4,4);
\draw (4,5)--(4,6);
\draw (3,4)--(3.4,4.4);
\draw (3.6,4.6)--(4,5);
\draw (3,5)--(4,4);
\draw (3,8)node{$a_4$};
\draw (2,7)node{$a_3$};
\draw (1,4.5)node[left]{$a_1$};
\draw (4,3.5)node[right]{$\lambda$};
\draw (4,5.5)node[right]{$\mu$};
\draw (2,2)node{$a_2$};
\end{tikzpicture}
\caption{Vertical reflection of Fig.~\ref{conn13}}
\label{conn14a}
\end{center}
\end{figure}

\begin{proposition}\label{conj}
We have the identity as in Fig.~\ref{conjugate}.
\end{proposition}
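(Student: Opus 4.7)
The plan is to derive the identity directly from comparing the diagrams in Fig.~\ref{conn13} and Fig.~\ref{conn14a}. The key observation is already recorded in the paragraph preceding the proposition: Fig.~\ref{conn14a} is obtained from Fig.~\ref{conn13} by a vertical reflection, and in the graphical calculus of unitary tensor categories this operation yields the complex conjugate of the scalar value. Under this reflection the crossing switches orientation, so where Fig.~\ref{conn13} encodes the braiding operator $\E^+(\lambda,a_i)$ (via the rewriting in Fig.~\ref{conn10}-\ref{conn12}), Fig.~\ref{conn14a} encodes $\E^-(\lambda,a_i)^*$. Consequently, Fig.~\ref{conn14a} should be recognized as the analog of the rewriting (Fig.~\ref{conn12}--\ref{conn13}) for $W_4(\alpha^-_\lambda,\mu)$, evaluated on a suitably relabeled cell.

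First I would identify precisely which cell of $W_4(\alpha^-_\lambda,\mu)$ is computed by Fig.~\ref{conn14a}. Reading off the boundary labels, vertical reflection interchanges the positions occupied by $a_2$ and $a_3$, while $a_1$ and $a_4$ remain at the opposite corners. The four boundary intertwiners $T_1, T'_2, T'_3, T_4$ of Fig.~\ref{conn10} are correspondingly replaced by their Frobenius duals, in accordance with the normalization convention of Fig.~\ref{triple} and the rotation invariance coming from Izumi's Frobenius reciprocity.

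Second I would equate the two diagrams via the complex conjugate relation. Both Fig.~\ref{conn13} and Fig.~\ref{conn14a} carry the common normalization factor $\sqrt{d_\lambda d_\mu d_{a_1} d_{a_4}}$, so after cancellation the identity of Fig.~\ref{conjugate} reads as the value of $W_4(\alpha^+_\lambda,\mu)$ on a given cell being equal to the complex conjugate of the value of $W_4(\alpha^-_\lambda,\mu)$ on the reflected cell, possibly weighted by a ratio of quantum dimensions arising from differing normalizations at the four corners. This is exactly the ``switching of positive and negative braiding'' advertised in the section title.

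The main obstacle is the bookkeeping of the boundary intertwiners and the residual ratio of quantum dimensions. Specifically, one must check that the intertwiners appearing on the right-hand side of the identity are exactly the Frobenius duals of those on the left, and that the normalization factors introduced by Fig.~\ref{triple} on each side combine to a clean ratio, in complete analogy with the derivation of crossing symmetry for $W_1(\lambda,\mu)$ in Fig.~\ref{conn16}, where the factor $\sqrt{d_{\nu_1}d_{\nu_4}/d_{\nu_2}d_{\nu_3}}$ appeared. Once these boundary identifications are matched up correctly, the proposition follows at once from the complex conjugate relation between vertically reflected diagrams.
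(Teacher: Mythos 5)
Your mechanism is the right one and is essentially the paper's: vertically reflect Fig.~\ref{conn13} to get Fig.~\ref{conn14a}, use that reflection gives the complex conjugate scalar, observe that $a_2$ and $a_3$ trade corners, and match normalizations. But there is a genuine error at the key identification step. You claim the reflected diagram is a cell of $W_4(\alpha^-_\lambda,\mu)$ ``on a suitably relabeled cell.'' It is not: under the reflection the two through-strands trade roles, so the strand $\mu$ becomes the one implementing the induction (the vertical sides of the cell) and $\lambda$ becomes the horizontal strand, with the crossing reversed. Compare the labels in the two figures: in Fig.~\ref{conn13} the upper wire is $\lambda$ and the lower is $\mu$, while in Fig.~\ref{conn14a} the upper wire is $\mu$ and the lower is $\lambda$. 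Hence Fig.~\ref{conn14a} is the rewriting (in the sense of Fig.~\ref{conn10}--\ref{conn13}) of $W_4(\alpha_\mu^-,\lambda)$, \emph{not} of $W_4(\alpha_\lambda^-,\mu)$. Since the content of Proposition \ref{conj} is precisely this interchange of $\lambda$ and $\mu$ (see the right-hand side of Fig.~\ref{conjugate}), your argument as written proves a different, and in general false, identity.

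Two smaller points. First, no Frobenius duals of the boundary isometries are needed here: unlike the crossing symmetry of $W_1(\lambda,\mu)$ in Fig.~\ref{conn16}, where the reflection is combined with a redrawing that rotates trivalent vertices, the present argument is a pure reflection, and the same isometries $T_1\in\Hom(a_1\mu,a_2)$, $T'_2\in\Hom(a_2\lambda,a_4)$, $T'_3\in\Hom(a_1\lambda,a_3)$, $T_4\in\Hom(a_3\mu,a_4)$ simply get reassigned to the reflected edges of the cell (with the overall value conjugated). Second, your hedge about a residual ratio of quantum dimensions can be eliminated: the normalization of Fig.~\ref{conn13} is $\sqrt{d_\lambda d_\mu d_{a_1} d_{a_4}}$, which is symmetric under exchanging $\lambda$ with $\mu$, and $a_1,a_4$ remain in the corner positions entering the normalization, so the identity in Fig.~\ref{conjugate} holds exactly, with no dimension factor. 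Once you correct the identification to $W_4(\alpha_\mu^-,\lambda)$ and drop the Frobenius-dual step, your argument coincides with the paper's proof.
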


\begin{figure}[H]
\begin{center}
\begin{tikzpicture}[scale=2]
\draw [-to](1,1)--(2,1);
\draw [-to](1,2)--(2,2);
\draw [-to](1,2)--(1,1);
\draw [-to](2,2)--(2,1);
\draw [thick] (0.7,2.3)--(2.3,2.3);
\draw (1,1)node[below left]{$a_3$};
\draw (1,2)node[above left]{$a_1$};
\draw (2,1)node[below right]{$a_4$};
\draw (2,2)node[above right]{$a_2$};
\draw (1.5,1.5)node{$W_4(\alpha_\lambda^+,\mu)$};
\draw (2.5,1.5)node{$=$};
\draw (3.5,1.5)node{$W_4(\alpha_\mu^-,\lambda)$};
\draw [-to](3,1)--(4,1);
\draw [-to](3,2)--(4,2);
\draw [-to](3,2)--(3,1);
\draw [-to](4,2)--(4,1);
\draw (3,1)node[below left]{$a_2$};
\draw (3,2)node[above left]{$a_1$};
\draw (4,1)node[below right]{$a_4$};
\draw (4,2)node[above right]{$a_3$};
\end{tikzpicture}
\caption{Complex conjugate and opposite braiding}
\label{conjugate}
\end{center}
\end{figure}

We also show that we can rewrite $W_3(\lambda)$ into the
form without using $\alpha_\lambda^+$ as follows.

\begin{figure}[H]
\begin{center}
\begin{tikzpicture}[scale=0.6]
\draw [thick](6,13.5) arc (0:180:1.75);
\draw (4,12) arc (0:90:1.5);
\draw [thick](2.5,13.5) arc (90:180:1.5);
\draw (5,10) arc (0:180:1);
\draw [thick](1,5) arc (180:270:1);
\draw (2,4) arc (270:360:1);
\draw [thick](2,3) arc (180:360:2);
\draw [thick](2,3)--(2,4);
\draw [thick](6,3)--(6,13.5);
\draw [thick](1,10)--(1,12);
\draw [thick](3,7)--(3,8);
\draw [thick](1,10)--(1.9,9.1);
\draw [thick](2.1,8.9)--(3,8);
\draw [thick](1,5)--(1.9,5.9);
\draw [thick](2.1,6.1)--(3,7);
\draw (3,5)--(2,6);
\draw (2,9)--(3,10);
\draw (3,8)--(5,10);
\draw (4,11)--(4,12);
\draw [ultra thick](2,9)--(1,8);
\draw [ultra thick](1,8)--(1,7);
\draw [ultra thick](1,7)--(2,6);
\draw (4,11.5)node[right]{$\nu_2$};
\draw (1,11.5)node[left]{$\iota$};
\draw (3,11)node{$\lambda$};
\draw (4,9)node[below right]{$\nu_1$};
\draw (3,7.3)node[right]{$a_1$};
\draw (1,7.5)node[left]{$\alpha_\lambda^+$};
\draw (3,5.6)node{$\lambda$};
\draw (2,3.3)node[left]{$a_2$};
\end{tikzpicture}
\caption{Redrawing of $W_3(\lambda)$}
\label{conn18}
\end{center}
\end{figure}

\begin{figure}[H]
\begin{center}
\begin{tikzpicture}[scale=0.6]
\draw [thick](6,12) arc (0:180:1.75);
\draw (4,10.5) arc (0:90:1.5);
\draw [thick](2.5,12) arc (90:180:1.5);
\draw [thick](1,6.5) arc (180:270:1);
\draw (5,8.5) arc (0:180:1);
\draw [thick](2,4.5) arc (180:270:1);
\draw [thick](3,2.5) arc (180:360:1.5);
\draw (3,3.5) arc (270:360:1);
\draw (2,5.5) arc (270:360:1);
\draw [thick](2,4.5)--(2,5.5);
\draw [thick](3,2.5)--(3,3.5);
\draw [thick](1,6.5)--(1,10.5);
\draw [thick](6,2.5)--(6,12);
\draw (4,4.5)--(5,6.5);
\draw (5,6.5)--(3,8.5);
\draw (4,9.5)--(4,10.5);
\draw (3,6.5)--(3.9,7.4);
\draw (4.1,7.6)--(5,8.5);
\draw (4,10.5)node[right]{$\nu_2$};
\draw (1,9)node[left]{$\iota$};
\draw (4.7,5.5)node[below]{$\lambda$};
\draw (3.1,5.5)node{$\nu_1$};
\draw (2,4.7)node[left]{$a_1$};
\draw (3,2.5)node[left]{$a_2$};
\end{tikzpicture}
\caption{Further redrawing of $W_3(\lambda)$}
\label{conn19}
\end{center}
\end{figure}

\section{Locality of the $Q$-system 
and flatness of $W_4(\alpha_\lambda^+,\mu)$}

In the case of Ocneanu's construction \cite{O3}, he obtained flat
connections only for $A_n$, $D_{2n}$, $E_6$ and $E_8$ as we see 
in the next Section in detail.  (Note that Assumption \ref{assump}
does not hold for these examples.  We will treat this issue in
the following Section.  Also see \cite[Theorem 11.24]{EK2}.)
It has been observed that these cases exactly correspond
to the $Q$-systems with locality $\e(\theta,\theta)\gamma(v)
=\gamma(v)$ as in \cite[Section 5]{BEK2} (where this property was
called chiral locality).  So we expect some
relations between locality of the $Q$-system and
flatness of the corresponding $\alpha^\pm$-induced connection in general.
We show that this is indeed the case in this Section.

We now assume locality.  Note that we then have
irreducibility of $\iota:N \hookrightarrow M$ by
\cite[Corollary 3.6]{BE1}.  (Having a $Q$-system with
locality in a general modular tensor category is enough in \cite{BE1}
rather than a conformal net.)

We prove flatness of the connection $W_4(\alpha_\lambda^+,\mu)$
in the sense of Fig.~\ref{f1121}, which is taken from
\cite[Fig.~11.21]{EK2}.  We first need a lemma.

\begin{figure}[H]
\begin{center}
\begin{tikzpicture}[scale=0.8]
\draw [-to](1,3.5)--(1,2.5);
\draw [-to](4,3.5)--(4,2.5);
\draw [-to](1,3.5)--(2,3.5);
\draw [-to](1,1)--(2,1);
\filldraw (2.5,3.5) circle (0.7pt);
\filldraw (3,3.5) circle (0.7pt);
\filldraw (3.5,3.5) circle (0.7pt);
\filldraw (2.5,1) circle (0.7pt);
\filldraw (3,1) circle (0.7pt);
\filldraw (3.5,1) circle (0.7pt);
\filldraw (1,1.5) circle (0.7pt);
\filldraw (1,2) circle (0.7pt);
\filldraw (4,1.5) circle (0.7pt);
\filldraw (4,2) circle (0.7pt);
\draw (1,3.5)node{$*$};
\draw (1,1)node{$*$};
\draw (4,3.5)node{$*$};
\draw (4,1)node{$*$};
\draw (5,2.25)node{$=1$};
\draw (1,2.75)node[left]{$\rho$};
\draw (4,2.75)node[right]{$\rho$};
\draw (2.5,3.5)node[above]{$\sigma$};
\draw (2.5,1)node[below]{$\sigma$};
\end{tikzpicture}
\caption{Flatness as in \cite[Fig.~11.21]{EK2}}
\label{f1121}
\end{center}
\end{figure}

\begin{lemma}\label{cross}
Suppose $\lambda,\mu$ are irreducible endomorphisms of $N$
contained in $\theta$.
We then have the identity as in Fig.~\ref{cross1}.
\end{lemma}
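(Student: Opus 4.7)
The plan is to reduce the stated crossing identity to the defining locality relation $\e(\theta,\theta)\gamma(v)=\gamma(v)$ for the $Q$-system, using naturality of the braiding and Frobenius reciprocity to pass between intertwiners of $\lambda,\mu$ and intertwiners of $\theta=\bar\iota\iota$. Since $\lambda$ and $\mu$ are irreducible subobjects of $\theta$, I can fix isometries $S\in\Hom(\lambda,\theta)$ and $T\in\Hom(\mu,\theta)$, and by Frobenius reciprocity convert these into isometries $\hat S\in\Hom(\iota\lambda,\iota)$ and $\hat T\in\Hom(\iota\mu,\iota)$ (or their duals), which is exactly how subobjects of $\theta$ sit graphically inside an $M$-$M$ strand.

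First, I would rewrite both sides of Fig.~\ref{cross1} by inserting the isometries $S,T$ (and, where necessary, their Frobenius duals) so that all wires labeled $\lambda,\mu$ are lifted to wires labeled $\theta$. By naturality of the braiding, this expresses the crossing between $\lambda$ and $\mu$ in terms of $\e^\pm(\theta,\theta)$ sandwiched between the appropriate $S,T$-isometries, in the spirit of the identity $\e(\lambda,\mu)=T^*\mu(S^*)\e(\theta,\theta)\lambda(T)S$ coming from functoriality.

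Second, I would apply the locality hypothesis. The chiral locality $\e(\theta,\theta)\gamma(v)=\gamma(v)$ translates graphically, through $M=Nv$ and $\theta=\bar\iota\iota$, into the statement that the braiding of two $\theta$ strands meeting the $M$-$M$ wire (the very thick wire in the diagrams of Section~\ref{alpha}) is trivial; equivalently, that $\e^+(\theta,\theta)$ and $\e^-(\theta,\theta)$ coincide when composed with the canonical isometries coming from $v$. Inserting this into the expression from the first step collapses the crossing on the left-hand side of Fig.~\ref{cross1} into the uncrossed (or oppositely crossed, depending on the figure's sign convention) diagram on the right-hand side. Throughout, I would keep track of the $\sqrt[4]{d_\mu d_\nu/d_\lambda}$ normalizations from Fig.~\ref{triple} to ensure both diagrams carry the same prefactor.

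The main obstacle will be the bookkeeping at the junctions where subobject-isometries $S,T$ meet the $M$-$M$ wire: each use of Frobenius reciprocity (via Izumi's rotation invariance cited after Fig.~\ref{triple}) introduces a dimension factor, and one must verify that the factors produced on the two sides of Fig.~\ref{cross1} match exactly before the locality relation is applied. Once that normalization check is in place, the equality is essentially a graphical restatement of $\e(\theta,\theta)\gamma(v)=\gamma(v)$ read through the embeddings $\lambda,\mu\hookrightarrow\theta$, and no further analytic input is needed beyond irreducibility of $\iota$ (which follows from locality by \cite[Corollary 3.6]{BE1}, as noted just before the lemma).
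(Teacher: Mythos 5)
Your proposal is correct and takes essentially the same route as the paper: the paper's proof likewise starts from the graphical form of the locality relation $\e(\theta,\theta)\gamma(v)=\gamma(v)$ (Fig.~\ref{loc1}), composes with (co-)isometries in $\Hom(\theta,\lambda)$ and $\Hom(\theta,\mu)$, slides them through the crossing by naturality of the braiding, rewrites $\theta=\bar\iota\iota$, and straightens/rotates the diagram to obtain Fig.~\ref{cross1}. Your version, which lifts the $\lambda$ and $\mu$ wires to $\theta$ via the subobject isometries and then applies locality, is the same argument read in the opposite direction.
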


\begin{figure}[H]
\begin{center}
\begin{tikzpicture}[scale=0.8]
\draw [thick](1,4)--(1,1);
\draw [thick](4.5,4)--(4.5,1);
\draw (1,1.5)--(3,3.5);
\draw (4.5,1.5)--(6.5,1.5);
\draw (4.5,3.5)--(6.5,3.5);
\draw (1,3.5)--(1.8,2.7);
\draw (2.2,2.3)--(3,1.5);
\draw (1,4)[left]node{$\iota$};
\draw (4.5,4)[left]node{$\iota$};
\draw (3.75,2.5)node{$=$};
\draw (6.5,1.5)node[right]{$\lambda$};
\draw (6.5,3.5)node[right]{$\mu$};
\draw (3,1.5)node[right]{$\lambda$};
\draw (3,3.5)node[right]{$\mu$};
\end{tikzpicture}
\caption{A consequence of locality}
\label{cross1}
\end{center}
\end{figure}

\begin{proof}
Locality gives Fig.~\ref{loc1}, where the triple
vertices on the both hand sides represent $\gamma(v)$.
We then have the
identity in Fig.~\ref{loc2}, where small black circles represent
co-isometries in $\Hom(\theta,\lambda)$ and $\Hom(\theta,\mu)$.
We next have the identity in Fig.~\ref{loc3} by a property
of braiding and then 
the identity in Fig.~\ref{loc4} by rewriting $\theta$ with
$\bar\iota\iota$.  Pulling the thick wires straight and
rotating this diagram for
$90$ degrees gives the desired conclusion.
\end{proof}

\begin{figure}[H]
\begin{center}
\begin{tikzpicture}[scale=0.8]
\draw (1.5,2.5)--(1.5,3.5);
\draw (2,2) arc (0:180:0.5);
\draw (1,2)--(2,1);
\draw (1,1)--(1.3,1.3);
\draw (1.7,1.7)--(2,2);
\draw (4,2)--(4,3.5);
\draw (4,2)--(3.5,1);
\draw (4,2)--(4.5,1);
\draw (2.75,2)node{$=$};
\draw (1,1)node[below]{$\theta$};
\draw (2,1)node[below]{$\theta$};
\draw (3.5,1)node[below]{$\theta$};
\draw (4.5,1)node[below]{$\theta$};
\draw (1.5,3.5)node[above]{$\theta$};
\draw (4,3.5)node[above]{$\theta$};
\end{tikzpicture}
\caption{Locality of a $Q$-system}
\label{loc1}
\end{center}
\end{figure}

\begin{figure}[H]
\begin{center}
\begin{tikzpicture}[scale=0.8]
\draw (1.5,3.5)--(1.5,4.5);
\draw (2,3) arc (0:180:0.5);
\draw (1,3)--(2,2);
\draw (1,2)--(1.3,2.3);
\draw (1.7,2.7)--(2,3);
\draw (4,3)--(4,4.5);
\draw (4,3)--(3.5,2);
\draw (4,3)--(4.5,2);
\draw (1,1)--(1,2);
\draw (2,1)--(2,2);
\draw (3.5,1)--(3.5,2);
\draw (4.5,1)--(4.5,2);
\draw (2.75,2.5)node{$=$};
\draw (1,3)node[left]{$\theta$};
\draw (2,3)node[right]{$\theta$};
\draw (3.5,2.5)node{$\theta$};
\draw (4.5,2.5)node{$\theta$};
\draw (1.5,4.5)node[above]{$\theta$};
\draw (4,4.5)node[above]{$\theta$};
\draw (1,1)node[below]{$\lambda$};
\draw (2,1)node[below]{$\mu$};
\draw (3.5,1)node[below]{$\lambda$};
\draw (4.5,1)node[below]{$\mu$};
\filldraw (1,2) circle (2pt);
\filldraw (2,2) circle (2pt);
\filldraw (3.5,2) circle (2pt);
\filldraw (4.5,2) circle (2pt);
\end{tikzpicture}
\caption{A consequence of Fig.~\ref{loc1}}
\label{loc2}
\end{center}
\end{figure}

\begin{figure}[H]
\begin{center}
\begin{tikzpicture}[scale=0.8]
\draw (1.5,2.5)--(1.5,3.5);
\draw (2,2) arc (0:180:0.5);
\draw (1,2)--(2,1);
\draw (1,1)--(1.3,1.3);
\draw (1.7,1.7)--(2,2);
\draw (4,2.5)--(4,3.5);
\draw (4,2.5)--(3.5,1.5);
\draw (4,2.5)--(4.5,1.5);
\draw (3.5,1)--(3.5,1.5);
\draw (4.5,1)--(4.5,1.5);
\draw (2.75,2)node{$=$};
\draw (1,2.5)node[left]{$\theta$};
\draw (2,2.5)node[right]{$\theta$};
\draw (3.5,2)node{$\theta$};
\draw (4.5,2)node{$\theta$};
\draw (1.5,3.5)node[above]{$\theta$};
\draw (4,3.5)node[above]{$\theta$};
\draw (1,1)node[below]{$\lambda$};
\draw (2,1)node[below]{$\mu$};
\draw (3.5,1)node[below]{$\lambda$};
\draw (4.5,1)node[below]{$\mu$};
\filldraw (1,2) circle (2pt);
\filldraw (2,2) circle (2pt);
\filldraw (3.5,1.5) circle (2pt);
\filldraw (4.5,1.5) circle (2pt);
\end{tikzpicture}
\caption{A consequence of Fig.~\ref{loc2}}
\label{loc3}
\end{center}
\end{figure}

\begin{figure}[H]
\begin{center}
\begin{tikzpicture}[scale=0.8]
\draw [thick](1.2,2.5)--(1.2,3.5);
\draw [thick](1.8,2.5)--(1.8,3.5);
\draw [thick](1.2,2.5) arc (90:180:0.5);
\draw [thick](2,2) arc (0:180:0.5);
\draw [thick](2.3,2) arc (0:90:0.5);
\draw [thick](0.7,2) arc (180:360:0.15);
\draw [thick](2,2) arc (180:360:0.15);
\draw (0.85,1.85)--(2.15,0.55);
\draw (0.85,0.55)--(1.35,1.05);
\draw (1.55,1.35)--(2.15,1.85);
\draw (2.75,2)node{$=$};
\draw (1.2,3.5)node[above]{$\bar\iota$};
\draw (1.8,3.5)node[above]{$\iota$};
\draw (1.2,1.8)node[above]{$\iota$};
\draw (0.85,0.55)node[below]{$\lambda$};
\draw (2.15,0.55)node[below]{$\mu$};
\draw [thick](3.7,2.5)--(3.7,3.5);
\draw [thick](4.3,2.5)--(4.3,3.5);
\draw [thick](3.7,2.5) arc (90:180:0.5);
\draw [thick](4.5,2) arc (0:180:0.5);
\draw [thick](4.8,2) arc (0:90:0.5);
\draw [thick](3.2,2) arc (180:360:0.15);
\draw [thick](4.5,2) arc (180:360:0.15);
\draw (3.35,1.85)--(3.35,0.55);
\draw (4.65,1.85)--(4.65,0.55);
\draw (3.7,3.5)node[above]{$\bar\iota$};
\draw (4.3,3.5)node[above]{$\iota$};
\draw (3.7,1.8)node[above]{$\iota$};
\draw (3.35,0.55)node[below]{$\lambda$};
\draw (4.65,0.55)node[below]{$\mu$};
\end{tikzpicture}
\caption{A consequence of Fig.~\ref{loc3}}
\label{loc4}
\end{center}
\end{figure}

\begin{theorem}\label{flat}
Under Assumption \ref{assum0},
the connection $W_4(\alpha_\lambda^+,\mu)$ is flat.
\end{theorem}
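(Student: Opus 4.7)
The plan is to unfold the flatness condition of Fig.~\ref{f1121} using the $\alpha^+$-free rewriting of $W_4(\alpha_\lambda^+,\mu)$ given in Fig.~\ref{conn13}, and then to kill the residual braiding via Lemma~\ref{cross}, which is the diagrammatic incarnation of locality.

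First I would spell out the flatness condition explicitly on the string algebra associated with $W_4(\alpha_\lambda^+,\mu)$: it asserts that a particular composition of cells around a closed rectangle, normalized by Perron--Frobenius weights, equals $1$.  Since vertical edges are labeled by $\alpha_\lambda^+$ and horizontal edges by $\mu$ acting on $M$-$N$ morphisms, this identity is the assertion that the $\alpha_\lambda^+$-action and the $\mu$-action commute on the bimodule ${}_M M_N$ in the flat sense of Ocneanu.

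Next, substituting Fig.~\ref{conn13} for every cell in the flatness diagram produces a composite picture that no longer mentions $\alpha_\lambda^+$ directly; instead it displays several copies of a $\lambda$-$\mu$ braiding squeezed between $\iota$-lines coming from the $M$-$N$ morphism legs $a_i$.  This is exactly the geometric configuration in which Lemma~\ref{cross} applies.  Applying the lemma once at each crossing replaces every braid by a trivial juxtaposition of $\lambda$ and $\mu$ wires.  After all the crossings have been resolved, the resulting diagram is a product of uncrossed cells for $\lambda$ acting vertically and $\mu$ acting horizontally on the $M$-$N$ morphisms, and flatness for such a product follows at once from unitarity of the underlying intertwiners, together with the standard string-algebra identities used in the proof of Theorem~\ref{equiv0}.

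The main obstacle will be the last step above: justifying the applicability of Lemma~\ref{cross} uniformly across the composed diagram.  The lemma requires its two wires to be irreducible subobjects of $\theta=\bar\iota\iota$, so I will need to decompose the wires appearing in the composite picture into irreducible summands, check that the relevant summands are indeed contained in $\theta$ (which is forced by the presence of the $\iota$-lines on either side), and carefully track the scalar prefactors---in particular the $\sqrt{d_\lambda d_\mu d_{a_1} d_{a_4}}$ factor following Fig.~\ref{conn13}---to confirm that the final value is precisely $1$ rather than merely a nonzero scalar.  Once this bookkeeping is cleanly organized, flatness is an immediate consequence of locality and the $\alpha^+$-free form of the connection.
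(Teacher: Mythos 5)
Your overall strategy is the paper's: rewrite $W_4(\alpha_\lambda^+,\mu)$ in the $\alpha^+$-free form of Fig.~\ref{conn13} and then use Lemma~\ref{cross} (locality) to remove the braiding, so the partition function in Fig.~\ref{f1121} collapses to $1$. But there is a genuine gap in the execution. You claim that after substituting Fig.~\ref{conn13} into \emph{every} cell of the flatness diagram, each $\lambda$--$\mu$ crossing is ``squeezed between $\iota$-lines coming from the $M$-$N$ morphism legs $a_i$.'' That is false: the flatness boundary condition only pins the four \emph{corner} vertices to $*=\iota$; the interior vertices are summed over \emph{all} irreducible $M$-$N$ morphisms $a$, so an interior crossing is flanked by general $a$-wires, not $\iota$-wires. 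Lemma~\ref{cross} (whose proof uses co-isometries in $\Hom(\theta,\lambda)$, $\Hom(\theta,\mu)$ and the identity $\e(\theta,\theta)\gamma(v)=\gamma(v)$, hence needs the crossing to sit against $\iota$-lines with the wires coupling into $\theta=\bar\iota\iota$) simply does not apply at those interior crossings, so the step ``apply the lemma once at each crossing'' fails. Your fallback observation, that containment in $\theta$ is ``forced by the presence of the $\iota$-lines on either side,'' is only true at the outer boundary of the big diagram.

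The paper sidesteps this by never localizing the lemma at interior cells: it first proves the case where the flatness diagram is a single cell, so that all four corners are $\iota$ and Fig.~\ref{conn13} with $a_1=a_2=a_3=a_4=\iota$ exhibits exactly one crossing flanked by $\iota$-wires (Fig.~\ref{conn20}), to which Lemma~\ref{cross} applies and yields the normalization $\sqrt{d_\lambda d_\mu}\,d_\iota$, i.e.\ partition function $1$. The general $n\times m$ case is then handled not cell-by-cell but by replacing $\lambda$ and $\mu$ with the composite words $\bar\lambda\lambda\cdots\lambda$ and $\mu\bar\mu\cdots\bar\mu$ and running the same one-cell argument; equivalently, you should first collapse all interior summations using unitarity of the connections (completeness $\sum_{a,T}TT^*=1$ over internal edges), which reduces the substituted diagram to a single composite crossing between the corner $\iota$-wires, and only \emph{then} invoke Lemma~\ref{cross}, decomposed over the irreducible summands of the words that survive against $\Hom(\,\cdot\,,\theta)$. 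With that reordering your bookkeeping of the prefactor $\sqrt{d_\lambda d_\mu d_{a_1}d_{a_4}}$ goes through as you indicate. A useful sanity check that the cell-by-cell trivialization cannot work: for non-local $Q$-systems (the $D_{2n+1}$ and $E_7$ cases) the $\alpha$-induced connections are \emph{not} flat, so flatness must come from a global boundary effect of locality at the $\iota$-corners, not from untangling each interior crossing separately.
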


\begin{proof}
We prove the identity in Fig.~\ref{f1121}. 
Both horizontal and vertical sizes of the large
diagram in Fig.~\ref{f1121}, are supposed to be even, but
we can take both of them to be $1$ in our current setting, 
so we first give a proof for this case.

Now our $*$ vertex in Fig.~\ref{f1121} corresponds to
$\iota$.  Because all the four corner vertices in
Fig.~\ref{f1121} are now $\iota$, we
set $a_1=a_2=a_3=a_4=\iota$ in Fig.~\ref{conn13}.  We then have
Fig.~\ref{conn20} and the complex value this diagram represents
is equal to the one given by the partition function in 
Fig.~\ref{f1121} multiplied by
$\sqrt{d_\lambda d_\mu}d_\iota$.

By Lemma \ref{cross}, the value Fig.~\ref{conn20} represents
is equal to the one Fig.~\ref{conn21} represents.  The latter
is equal to $\sqrt{d_\lambda d_\mu}d_\iota$, so the complex
value given by the partition function in 
Fig.~\ref{f1121} is $1$.

When the horizontal and vertical sizes of the large
diagram in Fig.~\ref{f1121} are arbitrary, we replace
$\lambda$ and $\mu$ in the above arguments by
$\bar\lambda\lambda\cdots\lambda$ and
$\mu\bar\mu\cdots\bar\mu$.  Then the same arguments give
the value $1$ and we are done.
\end{proof}

\begin{figure}[H]
\begin{center}
\begin{tikzpicture}[scale=0.8]
\draw [thick](5,7) arc (0:180:1);
\draw [thick](3,7) arc (90:180:1);
\draw [thick](2,6) arc (90:180:1);
\draw [thick](1,4) arc (180:270:1);
\draw [thick](2,3) arc (180:270:1);
\draw [thick](3,2) arc (180:360:1);
\draw [thick](1,4)--(1,5);
\draw [thick](5,2)--(5,7);
\draw (4,6) arc (0:90:1);
\draw (3,5) arc (0:90:1);
\draw (2,3) arc (270:360:1);
\draw (3,2) arc (270:360:1);
\draw (4,3)--(4,4);
\draw (4,5)--(4,6);
\draw (3,4)--(4,5);
\draw (3,5)--(3.4,4.6);
\draw (3.6,4.4)--(4,4);
\draw (3,8)node{$\iota$};
\draw (2,7)node{$\iota$};
\draw (1,4.5)node[left]{$\iota$};
\draw (4,3.5)node[right]{$\mu$};
\draw (4,5.5)node[right]{$\lambda$};
\draw (2,2)node{$\iota$};
\end{tikzpicture}
\caption{A diagram for the connection $W_4(\alpha_\lambda^+,\mu)$
for $a_1=a_2=a_3=a_4=\iota$}
\label{conn20}
\end{center}
\end{figure}

\begin{figure}[H]
\begin{center}
\begin{tikzpicture}[scale=0.8]
\draw [thick](3,9) arc (0:180:1);
\draw [thick](1,2) arc (180:360:1);
\draw (2,7) arc (0:90:1);
\draw (1,6) arc (270:360:1);
\draw (2,4) arc (0:90:1);
\draw (1,3) arc (270:360:1);
\draw [thick](1,2)--(1,9);
\draw [thick](3,2)--(3,9);
\draw (1,8.5)node[left]{$\iota$};
\draw (1,7)node[left]{$\iota$};
\draw (1,5.5)node[left]{$\iota$};
\draw (1,4)node[left]{$\iota$};
\draw (2,7)node[right]{$\mu$};
\draw (2,4)node[right]{$\lambda$};
\end{tikzpicture}
\caption{Redrawing of Fig.~\ref{conn20} with Lemma \ref{cross}}
\label{conn21}
\end{center}
\end{figure}

\section{Examples}\label{exa}

Ocneanu considered connections on
$A$-$D$-$E$ Dynkin diagrams in \cite{O3}.  We revisit
this topic from our viewpoint now and would like to apply
the results in Section \ref{alpha}.
Here we have a nontrivial issue since Assumption \ref{assump}
does \textit{not} hold now for any choice of $\mu$.

Let $\Delta$ be the set of endomorphisms of a type III
factor $N$ corresponding
to the Wess-Zumino-Witten model $SU(2)_k$, where $k$ is a 
positive integer called a \textit{level}.  (See 
\cite[Subsection 16.2.3]{DMS}, for example.)  We label
the irreducible objects of the modular tensor category
with $0,1,2,\dots,k$, using the Dynkin diagram of type
$A_{k+1}$ as in Fig.~\ref{SU2}, where the label $0$
denotes the vacuum representation, that is, the
identity automorphism of $N$.  Such a system of
endomorphisms with braiding has been constructed from a
conformal net by
Wassermann \cite{W} and this braiding is nondegenerate 
by \cite[Corollary 37]{KLM} and \cite[Theorem 4.1]{X2}.

\begin{figure}[H]
\begin{center}
\begin{tikzpicture}[scale=0.8]
\draw [thick] (1,1)--(4,1);
\draw [thick] (6,1)--(7,1);
\filldraw (4.5,1) circle (1pt);
\filldraw (5,1) circle (1pt);
\filldraw (5.5,1) circle (1pt);
\filldraw (1,1) circle (3pt);
\filldraw (2,1) circle (3pt);
\filldraw (3,1) circle (3pt);
\filldraw (4,1) circle (3pt);
\filldraw (6,1) circle (3pt);
\filldraw (7,1) circle (3pt);
\draw (1,1)node[below]{$0$};
\draw (2,1)node[below]{$1$};
\draw (3,1)node[below]{$2$};
\draw (4,1)node[below]{$3$};
\draw (6,1)node[below]{$k-1$};
\draw (7,1)node[below]{$k$};
\end{tikzpicture}
\end{center}
\caption{Irreducible objects for the $SU(2)_k$ WZW-model}
\label{SU2}
\end{figure}

All $Q$-systems on $\Delta$ for all $k$ have been classified
in \cite[Section 2.5]{KL} for the local case and in
\cite[Section 2]{KLPR} for the general case.  They
are classified with a pair consisting one of the 
$A$-$D$-$E$ Dynkin diagrams and its vertex.  We explain
this from a viewpoint of the Goodman-de la Harpe-Jones
subfactor in \cite[Section 4.5]{GHJ}.

Let $\G$ be one of the $A$-$D$-$E$ Dynkin diagrams and choose
the vertex with the smallest
entry of the Perron-Frobenius eigenvector entry.  
We then have the Goodman-de la Harpe-Jones
subfactor as in \cite[Section 4.5]{GHJ} or
\cite[Section 11.6]{EK2}, and the corresponding $Q$-system.
If $\G$ is of type $A$, then this $Q$-system has index $1$
and is trivial.  If $\G$ is of type $D$, then the $Q$-system
has index $2$ and corresponds to a crossed product by
$\mathbb{Z}/2\mathbb{Z}$.  If $\G$ is $E_6$ or $E_8$, then
the $Q$-systems correspond to the
subfactors arising from conformal embeddings
$SU(2)_{10}\subset S(5)_1$ and $SU(2)_{28}\subset (\mathrm{G}_2)_1$
by \cite[Proposition A.3]{BEK2}.  Also see \cite[Appendix]{BEK2}
for the case of $E_7$.

We next choose $\lambda=\mu=1$
in the setting of Section \ref{alpha} and consider the connection
$W_4(\alpha_\lambda^\pm,\mu)$.  Since the irreducible $N$-$M$ 
morphisms are labeled with the vertices of $\G$
by the arguments in \cite[Section 11.6]{EK2}, all the four vertex
sets for $W_4(\alpha_\lambda^\pm,\mu)$ are also labeled with 
the vertices of $\G$.  Then the requirements for the
Perron-Frobenius eigenvalues and the Perron-Frobenius eigenvector
entries force all the four graphs of $W_4(\alpha_\lambda^\pm,\mu)$
to be $\G$, but Assumption \ref{assump} does not
hold, since the horizontal top and bottom graphs are never
connected, due to a $\mathbb{Z}/2\mathbb{Z}$-grading on
the vertices of the Dynkin diagrams.  This issue is resolved
as follows.

The connection $W_4(\alpha_\lambda^\pm,\mu)$ splits into two
connections on mutually disjoint graphs, both of which are isomorphic
to $\G$.  Then both of the two connections 
must be of the following form as
given in \cite{O2}.  (Also see \cite[Fig.~11.32]{EK2}.)
Let $n$ be its Coxeter number of $\G$
and set $\displaystyle\varepsilon=
\sqrt{-1}\exp\frac{\pi\sqrt{-1}}{2(n+1)}$.
We write $\mu_x$ for the Perron-Frobenius eigenvector
entry for a vertex $x$.  Then our connection is
given as follows.
(We can replace $\e$ with $\bar\e$.  By the arguments in
\cite[Section 11.5]{EK2}, these two choices give the only possible 
connections on $\G$.  If $\G$ is of type $A$, they give equivalent
connections.  If $\G$ is of type $D$ or $E$, they are not
mutually equivalent.  Here we allow only vertical gauges while
horizontal gauges are also allowed in \cite[Section 11.5]{EK2},
but this does not cause problems since our graph $\G$ is a tree.
See Remark after \cite[Theorem 3]{AH}.)

\begin{figure}[H]
\begin{center}
\begin{tikzpicture}[scale=0.8]
\draw [thick, ->] (1,1)--(2,1);
\draw [thick, ->] (1,2)--(2,2);
\draw [thick, ->] (1,2)--(1,1);
\draw [thick, ->] (2,2)--(2,1);
\draw (1.5,1.5)node{$W$};
\draw (1,1)node[below left]{$l$};
\draw (1,2)node[above left]{$j$};
\draw (2,1)node[below right]{$m$};
\draw (2,2)node[above right]{$k$};
\draw (5,1.5)node{$\displaystyle=\delta_{kl}\varepsilon+
\sqrt{\frac{\mu_k \mu_l}{\mu_j\mu_m}}\delta_{jm}\bar\varepsilon$};
\end{tikzpicture}
\end{center}
\caption{A connection on the Dynkin diagram}
\label{ADEconn}
\end{figure}

The horizontal top and bottom graphs for $W_4(\alpha_j^\pm,1)$ 
always have exactly two
connected components.  This is still valid
after we make irreducible decomposition of such connections.
Let $W$ be a pair of two such connections $W_a$ and $W_b$ arising from 
irreducible decomposition of $W_4(\alpha_j^\pm,1)$ for some $j$ and
$W'$ be a pair of two such connections $W'_a$ and $W'_b$ 
from $W_4(\alpha_k^\pm,1)$ for some $k$.  We can
compose $W_a$ with exactly one of $W'_a$ and $W'_b$, and
compose $W_b$ with the other one,
since they have the matching
horizontal bottom and top graphs.  For the fusion rules
and intertwiner spaces of this composition product, we can use
either of the two compositions and have the same results, 
by the same arguments to
the ones in the proof of Theorem \ref{equiv}.  In this way,
we obtain a fusion category of connections which is equivalent
to the one of $M$-$M$ morphisms arising from $\alpha^\pm$-induction.
We now obtain the diagrams of decompositions of
connections which are the same as in
\cite[Fig.~2, 5, 8, 9]{BE3} and
\cite[Fig.~40, 42]{BEK2}.  These were originally found by Ocneanu for
such connections.  We also know that the results in the previous
Section on flatness apply to these cases, though
Assumption \ref{assump} does not hold now.
For this type of computations, we do not need exact
information of the connections, and simply having the graphs
involved is often sufficient.  See \cite{G} for such
computations. 

We now discuss the issue of complex conjugate connections.
For $W_4(\alpha_1^\pm,1)$,
the connection Fig.~\ref{ADEconn} is symmetric in $j$ and $m$, 
so the effect of switching positive and negative braiding
in Proposition \ref{conj} now
amounts to taking simply complex conjugate connections.  
That is, $W_4(\alpha_1^+,1)$ and $W_4(\alpha_1^-,1)$ are
mutually complex conjugate.
Now consider the case of $\G=E_6$.  The connections
$W_4(\alpha_2^+,1)$ and $W_4(\alpha_2^-,1)$ are
also mutually complex conjugate.  Both of  the connections
$W_4(\alpha_3^+,1)$ and $W_4(\alpha_3^-,1)$ decomposes into
two irreducible connections each.  Since irreducible decomposition
of a complex conjugate connection gives complex conjugate
connections of those appearing in the irreducible decomposition
in general, the complex conjugates of the two irreducible
connections arising from $W_4(\alpha_3^+,1)$ appear in the
irreducible decomposition of $W_4(\alpha_3^-,1)$.  In this
way, we see that switching the positive and negative
braiding for the $\alpha$-induced connections amounts to
taking complex conjugate connections.  The same argument
also works for $E_7$ and $E_8$. This
was also observed by Ocneanu \cite{O3}, but is special to
the $A$-$D$-$E$ Dynkin diagrams.  
The decomposition rules for $E_7$ as in \cite[Fig.~42]{BEK2}
follows from our computations in \cite{EK1} showing that
the principal graph of the subfactor arising from the
$E_7$ connection is $D_{10}$.

The modular tensor categories corresponding to the
Wess-Zumino-Witten model $SU(N)_k$ also have
$\mathbb{Z}/N\mathbb{Z}$-grading, and a similar method
to the above gives how to handle this issue.

We add a remark on the effect of Assumption \ref{assump}.
Even without this Assumption, our 
Definitions \ref{W1}, \ref{W2}, \ref{W3}, \ref{W4} on
our new connections make sense.  The only problem is that
if horizontal graphs are disconnected, we cannot compose
two connections in a usual way in Theorem \ref{equiv}, and 
other results such as Theorem \ref{flat} are not affected.
Another way of handling the issue in Assumption \ref{assump}
is to make $\Delta$ smaller.  For example,
if we choose $\lambda=\mu=2$ in the numbering of irreducible
objects for the fusion category $SU(2)_k$ as in Fig.~\ref{SU2}
and consider only irreducible objects numbered with even integers,
then everything in the above Sections works fine.

\section{Triple sequence of string algebras and
another interpretation of the $\alpha$-induction 
in terms of bi-unitary connections}


Let $*$ be the vertex corresponding to the identity automorphism
of $N$ in $\Delta$.  We construct a triple sequence 
$\{A_{jkl}\}_{jkl}$ as in \cite[Section 2]{K2}.  This is
a triple sequence version of the standard construction 
of the double sequence of string algebras in 
\cite[Section 11.3]{EK2}.  A new property we need for
compatibility of identification is the 
Intertwining Yang-Baxter Equations as in
\cite[Axiom 7]{K2}, and we now have this identity
as in Fig.~\ref{conn17}.

For the commuting squares
\begin{align*}
&\begin{array}{ccc}
A_{2j,2k,2l}&\subset& A_{2j,2k+1,2l}\\
\cap && \cap\\
A_{2j+1,2k,2l}&\subset& A_{2j+1,2k+1,2l}
\end{array},\\
&\begin{array}{ccc}
A_{2j,2k,2l+1}&\subset& A_{2j,2k+1,2l+1}\\
\cap && \cap\\
A_{2j+1,2k,2l+1}&\subset& A_{2j+1,2k+1,2l+1}
\end{array},\\
&\begin{array}{ccc}
A_{2j,2k,2l}&\subset& A_{2j,2k+1,2l}\\
\cap && \cap\\
A_{2j,2k,2l+1}&\subset& A_{2j,2k+1,2l}
\end{array},\\ 
&\begin{array}{ccc}
A_{2j+1,2k,2l}&\subset& A_{2j+1,2k+1,2l}\\
\cap && \cap\\
A_{2j+1,2k,2l+1}&\subset& A_{2j+1,2k+1,2l}
\end{array},\\ 
&\begin{array}{ccc}
A_{2j,2k,2l}&\subset& A_{2j+1,2k,2l}\\
\cap && \cap\\
A_{2j,2k,2l+1}&\subset& A_{2j+1,2k,2l+1}
\end{array},\\
&\begin{array}{ccc}
A_{2j,2k+1,2l}&\subset& A_{2j+1,2k+1,2l}\\
\cap && \cap\\
A_{2j,2k+1,2l+1}&\subset& A_{2j+1,2k+1,2l+1}
\end{array},
\end{align*}
we use the connections
$W_1(\lambda,\mu)$,
$W_4(\alpha_\lambda^+,\mu)$,
$W_2(\mu)$, $W_2(\mu)$,
$W_3(\lambda)$, and $W_3(\lambda)$,
respectively.  We then have the following identification between
the string algebras and the endomorphism spaces.

\begin{align*}
A_{2j,2k,2l}&=
\End((\bar\lambda\lambda)^j(\bar\iota\iota)^l(\mu\bar\mu)^k),\\
A_{2j,2k+1,2l}&=
\End((\bar\lambda\lambda)^j(\bar\iota\iota)^l(\mu\bar\mu)^k\mu),\\
A_{2j+1,2k,2l}&=
\End(\lambda(\bar\lambda\lambda)^j(\bar\iota\iota)^l(\mu\bar\mu)^k),\\
A_{2j+1,2k+1,2l}&=
\End(\lambda(\bar\lambda\lambda)^j(\bar\iota\iota)^l(\mu\bar\mu)^k\mu),\\
A_{2j,2k,2l+1}&=\End((\alpha_{\bar\lambda}^+\alpha_\lambda^+)^j
\iota(\bar\iota\iota)^l(\mu\bar\mu)^k),\\
A_{2j,2k+1,2l+1}&=\End((\alpha_{\bar\lambda}^+\alpha_\lambda^+)^j
\iota(\bar\iota\iota)^l(\mu\bar\mu)^k\mu),\\
A_{2j+1,2k,2l+1}&=\End(\alpha_\lambda^+(\alpha_{\bar\lambda}^+
\alpha_\lambda^+)^j\iota(\bar\iota\iota)^l(\mu\bar\mu)^k),\\
A_{2j+1,2k+1,2l+1}&=\End(\alpha_\lambda^+(\alpha_{\bar\lambda}^+
\alpha_\lambda^+)^j\iota(\bar\iota\iota)^l(\mu\bar\mu)^k\mu),\\
\end{align*}

Note that for inclusions such as $A_{2j,2k,2l}\subset A_{2j,2k,2l+1}$,
we use unitary equivalences 
$\alpha^+_\lambda\iota\cong\iota\lambda$,
$\alpha^+_{\bar\lambda}\iota\cong\iota\bar\lambda$,
$\lambda\bar\iota\cong\bar\iota\alpha_\lambda^+$, and
$\bar\lambda\bar\iota\cong\bar\iota\alpha_{\bar\lambda}^+$.
These are compatible with ways of identification of strings
with the connections.  (The only nontrivial identification
is done with $W_3(\lambda)$, where we use unitary equivalence
of $\iota\lambda$ and $\alpha_\lambda\iota$ as in Fig.\ref{conn3e}.)

By taking unions over $k$ and making the GNS-completions 
with respect to the compatible trace, we have a
commuting square of hyperfinite type II$_1$ factors as
in \cite[Assumption 1.1]{K2}.  We see that our triple
sequence of string algebras arise from this commuting
square as in \cite[Section 3]{K2}.
See \cite[Section 5]{K2} for
the case of the Dynkin diagrams of type $A$-$D$-$E$.


Our construction of $\alpha$-induced connections from $N\subset M$
uses information on all the $N$-$M$ morphisms and their intertwiners.
This is also true for Ocneanu's chiral generator
picture in Fig.~\ref{chigen} in the double triangle algebra.
This is theoretically fine, but we would like to have a method
to obtain the $\alpha$-induced connections purely in terms of
connections.  We discuss such a method at the end of this paper.

In the original setting of the set $\Delta$ of irreducible
endomorphisms of $N$, we fix $\mu\in\Delta$
and suppose we have a family of connections $W_1(\lambda,\mu)$
for $\lambda\in\Delta$ and the horizontal top and bottom
graphs for all of them are the same finite bipartite graph $\G$.
Let $*$ be the vertex of $\G$ corresponding to the identity
automorphism in $\Delta$.
Our positive braiding gives equivalence of the two composite connections
$W_1(\lambda_1,\mu)\cdot W_1(\lambda_2,\mu)$ and
$W_2(\lambda_1,\mu)\cdot W_1(\lambda_1,\mu)$.  This is given by
Fig.~\ref{braiding}, where $S$ and $T$ give unitary matrices
giving vertical gauge choices arising from a positive
braiding between $\lambda_1$ and $\lambda_2$.

\begin{figure}[H]
\begin{center}
\begin{tikzpicture}[scale=1]
\draw [thick, ->] (1.5,3)--(1,2);
\draw [thick, ->] (1.5,3)--(2,2);
\draw [thick, ->] (1,2)--(1.5,1);
\draw [thick, ->] (2,2)--(1.5,1);
\draw [thick, ->] (1.5,1)--(4.5,1);
\draw [thick, ->] (1.5,3)--(4.5,3);
\draw [thick, ->] (2,2)--(4,2);
\draw [thick, ->] (4.5,3)--(4,2);
\draw [thick, ->] (4.5,3)--(5,2);
\draw [thick, ->] (4,2)--(4.5,1);
\draw [thick, ->] (5,2)--(4.5,1);
\draw (3,2.5)node{$W_1(\lambda_1,\mu)$};
\draw (3,1.5)node{$W_1(\lambda_2,\mu)$};
\draw (1.5,2)node{$S$};
\draw (4.5,2)node{$T$};
\draw (5.5,2)node{$=$};
\draw [thick, ->] (6,3)--(6,2);
\draw [thick, ->] (6,2)--(6,1);
\draw [thick, ->] (8,3)--(8,2);
\draw [thick, ->] (8,2)--(8,1);
\draw [thick, ->] (6,1)--(8,1);
\draw [thick, ->] (6,2)--(8,2);
\draw [thick, ->] (6,3)--(8,3);
\draw (7,2.5)node{$W_1(\lambda_2,\mu)$};
\draw (7,1.5)node{$W_1(\lambda_1,\mu)$};
\end{tikzpicture}
\end{center}
\caption{Braiding $\e^+(\lambda_1,\lambda_2)$}
\label{braiding}
\end{figure}

Suppose we have a flat connection $W_0$ with respect to $*$
(in the sense of \cite[Definition 11.16]{EK2}) with the horizontal top
and bottom graphs being finite bipartite graphs $\G$ and $\H$
such that the composition $W_0\cdot\bar W_0$ decomposes into
a direct sum of irreducible connections each of which is
equivalent to one of $W_1(\lambda,\mu)$.  This is a connection
version of our setting in Section \ref{prelim} and
our connection $W_0$ must be of the form $W_2(\mu)$ for some $\iota$.

Construct a double sequence of string algebras $\{B_{kl}\}_{kl}$
as in \cite[Section 11.3]{EK2} from the connection $W_0$.  
That is, the commuting square
$\begin{array}{ccc}
B_{2l,2k}&\subset& B_{2l,2k+1}\\
\cap && \cap\\
B_{2l+1,2k}&\subset& B_{2l+1,2k+1}
\end{array}$ is described with $W_0$.  We further construct
a double sequence of string algebras $\{C_{kl}\}_{kl}$
so that 
$\begin{array}{ccc}
B_{2l,2k}&\subset& B_{2l,2k+1}\\
\cap && \cap\\
C_{2l,2k}&\subset& C_{2l,2k+1}
\end{array}$ is described with $W_1(\lambda,\mu)$ and
$\begin{array}{ccc}
C_{2l,2k}&\subset& C_{2l,2k+1}\\
\cap && \cap\\
C_{2l+1,2k}&\subset& C_{2l+1,2k+1}
\end{array}$ is described with $W_0$ .
Because of the braiding between $W_1(\lambda,\mu)$ and
$W_0\cdot\bar W_0$, we have compatibility between
\[
\begin{array}{ccc}
B_{2l,k}&\subset& B_{2l,k+1}\\
\cap && \cap\\
C_{2l,k}&\subset& C_{2l,k+1}\\
\cap && \cap\\
C_{2l+2,k}&\subset& C_{2l+2,k+1}
\end{array}
\]
and 
\[
\begin{array}{ccc}
B_{2l,k}&\subset& B_{2l,k+1}\\
\cap && \cap\\
B_{2l+2,k}&\subset& B_{2l+2,k+1}\\
\cap && \cap\\
C_{2l+2,k}&\subset& C_{2l+2,k+1}
\end{array}
\]
as in the case of the Intertwining Yang-Baxter equation.

Consider an element in $B_{2l+1,k}$.  It is embedded into
$B_{2l+2,k}$ and then into $C_{2l+2,k}$.  It is not a priori
clear whether the image
in $C_{2l+2,k}$ or not, but actually the image is in $C_{2l+1,k}$
because we are in the setting of the triple sequence of
string algebras introduced in this Section.  This gives
an embedding of $B_{2l+1,k}$ into $C_{2l+1,k}$.  Then the
results in Section \ref{alpha} mean that the connection
describing the commuting square
$\begin{array}{ccc}
B_{2l+1,2k}&\subset& B_{2l+1,2k+1}\\
\cap && \cap\\
C_{2l+1,2k}&\subset& C_{2l+1,2k+1}
\end{array}$ 
is $W_4(\alpha_\lambda^+,\mu)$.

\end{document}